\newtheorem{lemma}{Lemma}[section]
\newtheorem{theorem}[lemma]{Theorem}
\newtheorem{proposition}[lemma]{Proposition}
\theoremstyle{definition}
\newtheorem{definition}[lemma]{Definition}
\newtheorem{example*}[lemma]{Example}
\theoremstyle{remark}
\newtheorem{remark}[lemma]{Remark}
\newtheorem*{rep@theorem}{\rep@title}
\newcommand{\newreptheorem}[2]{%
\newenvironment{rep#1}[1]{%
\def\rep@title{{\bf #2 \ref{##1}}}%
\begin{rep@theorem}}%
{\end{rep@theorem}}}
\DeclareRobustCommand{\qedify}[1]{%
  \ifmmode \quad\hbox{#1}
  \else
    \leavevmode\unskip\penalty9999 \hbox{}\nobreak\hfill
    \quad\hbox{#1}%
  \fi
}
\newenvironment{example}{\begin{example*}\pushQED{\qedify{$\diamondsuit$}}}{\popQED\end{example*}}
\numberwithin{equation}{section}
\newcommand{\cD}{\mathcal{D}}
\newcommand{\cP}{\mathcal{P}}
\newcommand{\cH}{\mathcal{H}}
\newcommand{\cA}{\mathcal{A}}
\newcommand{\ZZ}{\mathbb{Z}}
\newcommand{\NN}{\mathbb{N}}
\newcommand{\RR}{\mathbb{R}}
\newcommand{\BB}{\mathbb{B}}
\newcommand{\bfx}{\mathbf{x}}
\newcommand{\bfu}{\mathbf{u}}
\newcommand{\bfv}{\mathbf{v}}
\newcommand{\bfw}{\mathbf{w}}
\newcommand{\bfa}{\mathbf{a}}
\newcommand{\bfb}{\mathbf{b}}
\newcommand{\bfzero}{\mathbf{0}}
\DeclareMathOperator{\rank}{rank}
\DeclareMathOperator{\trop}{trop}
\DeclareMathOperator{\supp}{supp}
\DeclareMathOperator{\spann}{span}
\DeclareMathOperator{\uMat}{\underline{Mat}}
\DeclareMathOperator{\si}{si}
\newcommand{\superimpose}[2]{{\ooalign{$#1\@firstoftwo#2$\cr\hfil$#1\@secondoftwo#2$\hfil\cr}}}
\newcommand{\ttimes}{\hspace{0.3mm}{\mathpalette\superimpose{{\circ}{\cdot}}}\hspace{0.3mm}}
\newcommand{\tplus}{\oplus}
\newcommand{\Rbar}{\ensuremath{\overline{\mathbb R}}}
\newcommand{\defbold}{\textbf} 
\begin{document}

\title{Paving tropical ideals}

\author{Nicholas Anderson}
\address{School of Mathematical Sciences, Queen Mary University of
London, Mile End Road, London E1 4NS, United Kingdom}
\email{n.c.anderson@qmul.ac.uk}

\author{Felipe Rinc\'on}
\address{School of Mathematical Sciences, Queen Mary University of
London, Mile End Road, London E1 4NS, United Kingdom}
\email{f.rincon@qmul.ac.uk}
\maketitle

\begin{abstract}
Tropical ideals are a class of ideals in the tropical polynomial semiring that combinatorially abstracts the possible collections of supports of all polynomials in an ideal over a field. We study zero-dimensional tropical ideals $I$ with Boolean coefficients in which all underlying matroids are paving matroids, or equivalently, in which all polynomials of minimal support have support of size $\deg(I)$ or $\deg(I)+1$ -- we call them paving tropical ideals. We show that paving tropical ideals of degree $d+1$ are in bijection with $\ZZ^n$-invariant $d$-partitions of $\ZZ^n$. This implies that zero-dimensional tropical ideals of degree $3$ with Boolean coefficients are in bijection with $\ZZ^n$-invariant $2$-partitions of quotient groups of the form $\ZZ^n/L$. We provide several applications of these techniques, including a construction of uncountably many zero-dimensional degree-$3$ tropical ideals in one variable with Boolean coefficients, and new examples of non-realizable zero-dimensional tropical ideals.
\end{abstract}

\section{Introduction}

Investigating the connections between algebraic geometry and combinatorics has been an exceptionally fruitful pursuit in many fields over the past few decades. Tropical geometry has served as a very useful tool in this endeavor, allowing the study of subvarieties of toric varieties by analyzing their corresponding tropicalizations -- finite polyhedral complexes that retain important information about the original algebraic varieties. 
As explored in \cite{GG16} and \cite{MR16}, this tropicalization process can also be carried out algebraically, by sending the defining ideal of a subvariety to an ideal in the semiring of tropical polynomials.

Tropical ideals, axiomatized by Maclagan and Rinc\'on in \cite{MR18}, have been proposed as a sensible class of tropical objects for developing the study of tropical scheme theory. The class of tropical ideals includes all tropicalizations of classical ideals, but it is in general much larger. In \cite{MR18}, and later in \cite{MR}, Maclagan and Rinc\'on established several essential properties of tropical ideals and their associated tropical varieties, analogous to those of ideals in a polynomial ring and their algebraic varieties.
These include the fact that tropical ideals satisfy the ascending chain condition, have a Hilbert function that is eventually polynomial -- and thus a notion of dimension and degree -- and define varieties that are finite balanced polyhedral complexes.
While these properties were known for tropicalizations of ideals of a polynomial ring, called realizable tropical ideals, their proofs in the non-realizable case require new careful combinatorial analyses.
Part of this difficulty is related to the fact that we know very few examples so far of non-realizable tropical ideals: while it is easy to construct ideals over rings, constructing tropical ideals in a purely tropical setting remains a more difficult task, as, for example, they are not closed under sums or intersections, nor do they admit a notion of finite generation that we know of.

Studying zero-dimensional tropical ideals has been a first step in the exploration of tropical ideals. These ideals play a very important role in the theory, as often general properties of tropical ideals can be reduced to the zero-dimensional case. In addition, their study is directly connected to an interesting moduli space -- the (tropical) Hilbert scheme of points. In her PhD thesis \cite{Zaj18}, Zajaczkowska studied degree-$2$ zero-dimensional tropical ideals with coefficients in the Boolean semiring $\BB$, and showed that they are in one-to-one correspondence with sublattices of $\Z^n$. This result was used in \cite{FGG} to study the space of all degree-$2$ zero-dimensional homogeneous tropical ideals in two variables, what can be thought of as the tropical Hilbert scheme of $2$ points in $\trop(\mathbb P^1)$. Higher-degree zero-dimensional tropical ideals have been studied in \cite{Sil} and \cite{FGG} in the case of homogeneous ideals in two variables, where interesting connections to Schur polynomials were developed in the realizable case.   

In this paper we study the class of zero-dimensional tropical ideals for which all underlying matroids are paving matroids, which we call \textit{paving tropical ideals}. Equivalently, a zero-dimensional tropical ideal $I$ is a paving tropical ideal if and only if all polynomials of minimal support in $I$ have support of size either $\deg(I)$ or $\deg(I)+1$. We give more detailed definitions in Section \ref{sec:paving}.

Throughout the paper we focus on the case of paving tropical ideals with coefficients in the Boolean semiring $\BB$. This allows us to focus purely on the combinatorics of the underlying matroids, and provides already many interesting examples. Understanding tropical ideals with Boolean coefficients is an important and useful step, as general tropical ideals can be studied using them by taking initial ideals or trivializing the valuation. 

Paving matroids of rank $d+1$ over a ground set $E$ are in bijection with $d$-partitions of $E$ -- a generalization of the notion of partition that allows for intersections of size less than $d$ between subsets. We extend this bijection to prove the following result. 

\newtheorem*{thm:correspondenceThm}{\bf Theorem \ref{thm:correspondenceThm}}
\begin{thm:correspondenceThm}
There is a one-to-one correspondence between paving tropical ideals of degree $d+1$ with coefficients in the Boolean semiring $\BB$ and $d$-partitions of $\Z^n$ that are invariant under the action of $\Z^n$.
\end{thm:correspondenceThm}
 
Using the fact that any matroid of rank $2$ with no loops is a paving matroid, we recover the correspondence described in \cite{Zaj18}*{Theorem 4.2.4} between zero-dimensional degree-$2$ tropical ideals with Boolean coefficients and sublattices of $\ZZ^n$.

We also use these techniques to understand general zero-dimensional tropical ideals of degree $3$ with Boolean coefficients. Based on the fact that the simplification of any rank $3$ matroid is a paving matroid, we prove the following result.

\newtheorem*{thm:simplification}{\bf Theorem \ref{thm:simplification}}
\begin{thm:simplification}
There is a one-to-one correspondence between zero-dimensional tropical ideals of degree $3$ with Boolean coefficients and pairs $(L,\cP)$, where $L \subset \ZZ^n$ is a sublattice and $\cP$ is a $\ZZ^n$-invariant $2$-partition of the quotient group $\Z^n/L$.
\end{thm:simplification}

Finally, we use the fact that $d$-partitions have a natural notion of ``generating sets'' to construct explicit examples of tropical ideals. This allows us to provide the following applications:

\begin{itemize}
    \item {\bf Proposition \ref{prop:manyPavingIdeals}.} There are uncountably many zero-dimensional tropical ideals of degree 3 in $\BB[x^{\pm1}]$.
    \item {\bf Theorem \ref{thm:matroidIdeals}.} Any paving matroid on the set of variables $\{x_1, \dots, x_n\}$ can be extended to a zero-dimensional tropical ideal $I \subset \BB[x_1^{\pm 1}, \dots, x_n^{\pm 1}]$. This can be used to construct many examples of non-realizable tropical ideals. 
    \item {\bf Proposition \ref{prop:nonrealizable}.} There is a non-realizable zero-dimensional degree-$2$ tropical ideal in $\BB[x_1^{\pm1}, x_2^{\pm 1}, x_3^{\pm 1}]$.
\end{itemize}

The paper is organized as follows. In Section \ref{sec:paving} we provide all the basic definitions, and we develop the correspondence between paving tropical ideals and $d$-partitions stated in Theorem \ref{thm:correspondenceThm}. In Section \ref{sec:lowrank} we use these ideas to investigate general zero-dimensional tropical ideals of degree at most 3, proving Theorem \ref{thm:simplification}. Lastly, in Section \ref{sec:applications} we develop the applications listed above.

The results presented in this paper were obtained as part of the first author's MSc dissertation at Queen Mary University of London, under the supervision of the second author.

\section{Paving tropical ideals}\label{sec:paving}

We write 
$\Rbar = (\RR \cup \{\infty\}, \tplus, \ttimes)$ 
for the \defbold{tropical semiring}, where the tropical sum 
$\oplus$ is $\mathrm{min}$ and the tropical multiplication
$\ttimes$ is $+$. 
The \defbold{Boolean semiring} is 
$\BB := \{\infty, 0\} \subset \Rbar.$

In this paper we focus on tropical ideals with coefficients in $\BB$.
Given a polynomial $f = \bigoplus_{\bfu \in \ZZ^n} c_\bfu \bfx^\bfu 
\in \BB[x_1^{\pm 1}, \dots, x_n^{\pm 1}]$, its \defbold{support} is
$$\supp(f) := \{ \bfu \in \ZZ^n : c_\bfu \neq \infty\}.$$

\begin{definition}
An ideal $I$ in $\BB[x_1, \dots, x_n]$ or $\BB[x_1^{\pm 1}, \dots, x_n^{\pm 1}]$ 
is a \defbold{tropical ideal} if it satisfies the following 
``monomial elimination axiom'': 

$\bullet$ For any $f,g \in I$ and any $\bfu \in \supp(f) \cap \supp(g)$, 
there exists $h \in I$ such that
$$
\supp(f) \,\Delta\, \supp(g) \,\subset\, \supp(h) \,\subset\, 
(\supp(f) \cup \supp(g)) - \{\bfu\},
$$
where $\Delta$ denotes symmetric difference of sets. 
Equivalently, $I$ is a tropical ideal if
for any $E \subset \ZZ^n$ finite, the collection
$$
\supp(I|_E) := \{ \supp(f) : f \in I \text{ and } \supp(f) \subset E \} \subset 2^E
$$
is the collection of cycles (i.e. unions of circuits) 
of a matroid on the ground set $E$.
We denote this matroid by $\uMat(I|_E)$.
\end{definition}

Tropical ideals are combinatorial generalizations of the collections
of supports of all polynomials in an ideal with coefficients in a field.
Indeed, if $K$ is a field and $F \in K[x_1^{\pm 1}, \dots, x_n^{\pm 1}]$,
its \defbold{tropicalization} is the tropical polynomial
$$ \trop(F) := \bigoplus_{\bfu \in \supp(F)} \bfx^\bfu \, \in \BB[x_1^{\pm 1}, \dots, x_n^{\pm 1}].$$
If $J \subset K[x_1^{\pm 1}, \dots, x_n^{\pm 1}]$ is an ideal, 
its tropicalization
$$\trop(J) :=  \langle \supp(F) : F \in I \rangle \, \subset \BB[x_1^{\pm 1}, \dots, x_n^{\pm 1}]$$
is a tropical ideal. A tropical ideal of the form $\trop(J)$ for $J \subset K[x_1^{\pm 1}, \dots, x_n^{\pm 1}]$ is called \defbold{realizable} over the field $K$.

Tropical ideals have many properties analogous 
to those of ideals in a polynomial ring over a field.

The \defbold{Hilbert function} of a homogeneous tropical ideal 
$I \subset \BB[x_0, \dots, x_n]$ is the map $H_I : \NN \to \NN$ given by
$H_I(d) = \rank(\uMat(I_d))$, where $I_d$ is the degree-$d$ part of $I$.
In other words, $H_I(d)$ is the maximal size of a subset $B$ of monomials of 
degree $d$ with the property that $B$ does not contain the support of any 
polynomial in $I_d$. 

This notion can be extended to arbitrary tropical ideals in $\BB[x_1, \dots, x_n]$,
and also to tropical ideals in $\BB[x_1^{\pm 1}, \dots, x_n^{\pm 1}]$, 
as we now explain. 
The \defbold{homogenization} of a tropical polynomial $f = \bigoplus c_\bfu
\bfx^\bfu \in \BB[x_1,\dots,x_n]$ is 
$$\tilde{f} = \textstyle \bigoplus x_0^{d-|\bfu|} \ttimes c_\bfu
\bfx^\bfu \in \BB[x_0,x_1,\dots,x_n],$$
where $|\bfu| := \sum_{i=1}^n u_i$ and 
$d = \max(|\bfu| : c_\bfu \neq \infty )$.  
The homogenization of an ideal $I \subset \BB[x_1,\dots,x_n]$ 
is the homogeneous ideal 
$$I^h := \langle \tilde{f} : f \in I \rangle \subset \BB[x_0,x_1,\dots,x_n].$$
If $I$ is a tropical ideal then $I^h$ is a tropical ideal as well \cite{MR}*{Lemma 2.1}. 
In this case, the Hilbert function of $I$ is defined as the Hilbert function of $I^h$.
If $J \subset \BB[x_1^{\pm 1},\dots,x_n^{\pm 1}]$ is a tropical ideal, 
the intersection $J \cap \BB[x_1,\dots,x_n]$
is a tropical ideal in $\BB[x_1,\dots,x_n]$, 
and the Hilbert function of $J$ is defined as the Hilbert function of 
$J \cap \Rbar[x_1,\dots,x_n]$.

In all these cases, the Hilbert function of a tropical ideal $I$
agrees with a polynomial $P_I(d)$ for $d \gg 0$, called the 
\defbold{Hilbert polynomial} of $I$. 
The \defbold{dimension} $\dim(I)$ of $I$ 
is defined as the degree of $P_I$, and the \defbold{degree} $\deg(I)$ of $I$ is
defined as $\dim(I)!$ times the leading coefficient of $P_I$.

If $I$ is a zero-dimensional tropical ideal, its Hilbert polynomial is a constant, 
and the degree of $I$ is equal to that constant. The degree of a zero-dimensional
ideal $I$ is equal to the maximal size of a subset $B$ of monomials 
with the property that $B$ does not contain the support of any polynomial in $I$;
see \cite{MR}*{Lemma 5.2}.
   
A \defbold{finitary matroid} on a (possibly infinite) ground set $E$ is a non-empty collection of subsets of $E$, called independent sets, which is closed under taking subsets, satisfies the usual augmentation axiom for matroids, and furthermore, if all
finite subsets of a subset $I$ are independent then $I$ is independent.
Equivalently, minimal dependent sets (called circuits) are non-empty, non-comparable, satisfy the usual circuit elimination axiom for matroids, and are finite.
Maximal independent subsets are called bases.
A finitary matroid is said to have \defbold{finite rank} if all bases are finite; 
in this case, all bases have the same cardinality, called the rank of the matroid.
For more on different cryptomorphisms for finite-rank matroids, finitary matroids, and more general infinite matroids, see for example 
\cite{White86}*{Section 2.4}, \cite{White92}*{Chapter 3}, and \cite{InfiniteMatroids}.

A tropical ideal $I$ is naturally encoded by a finitary matroid on the set of monomials, as described below.

\begin{definition}
Any tropical ideal $I$ in $\BB[x_1, \dots, x_n]$ or $\BB[x_1^{\pm 1}, \dots, x_n^{\pm 1}]$ has an \defbold{underlying finitary matroid} $\uMat(I)$, whose ground set $E$ is equal to
the set $\NN^n$ or $\ZZ^n$, respectively. A subset $A \subset E$ is 
independent in $\uMat(I)$ if it does not contain the support of any polynomial in $I$.
Equivalently, the circuits of $\uMat(I)$ are the minimal supports of polynomials in $I$. 
If the tropical ideal $I$ is zero-dimensional then this matroid has finite rank, equal to $\deg(I)$.
In this case, the rank of a subset $X \subset E$ is 
$$\rank(X) := \max \{ |A| : A \subset X \text{ is independent in } \uMat(I)\} < \infty.$$ 
\end{definition}

In this paper we investigate zero-dimensional tropical ideals whose underlying 
matroids are paving matroids.

\begin{definition} A (possibly infinite) matroid $M$ of finite rank $\rank(M) < \infty$
is a \defbold{paving matroid} if all the circuits of $M$ have size $\rank(M)$ or $\rank(M)+1$.
\end{definition}

For a finite rank matroid, the property of being paving is a local property, in the following sense.

\begin{proposition}\label{prop:pavingMinors}
A (possibly infinite) matroid $M$ of finite rank on the ground set $E$ is a paving matroid if and only if for all $E' \subset E$ finite, the restriction
$M|_{E'}$ is a paving matroid.
\end{proposition}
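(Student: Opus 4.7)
The plan is to exploit the compatibility of circuits with restriction together with some simple rank bookkeeping. The key observation is that for any $E' \subset E$, the circuits of the restriction $M|_{E'}$ are exactly the circuits of $M$ that happen to be contained in $E'$; this is immediate from the fact that a subset of $E'$ is independent in $M|_{E'}$ if and only if it is independent in $M$.

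For the forward direction, I would fix a finite $E' \subset E$, set $r = \rank(M)$ and $r' = \rank(M|_{E'})$, and take an arbitrary circuit $C$ of $M|_{E'}$. The observation above makes $C$ a circuit of $M$, so the paving hypothesis gives $|C| \in \{r, r+1\}$. Since every independent set in $M|_{E'}$ is independent in $M$ we have $r' \leq r$, hence $|C| \geq r'$; and for any $e \in C$ the set $C \setminus \{e\}$ is independent in $M|_{E'}$, which gives $|C| - 1 \leq r'$. Combining the two bounds yields $|C| \in \{r', r'+1\}$, which is exactly what paving of $M|_{E'}$ requires.

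For the reverse direction, I would start from an arbitrary circuit $C$ of $M$ and produce a finite witnessing restriction. Since $M$ is finitary of finite rank, $C$ is finite; fix a basis $B$ of $M$, which is finite of size $r$, and set $E' := C \cup B$. Then $M|_{E'}$ contains $B$ as an independent set of size $r$ and has rank at most $r$, so $\rank(M|_{E'}) = r$. The circuit $C$ is contained in $E'$, hence is a circuit of $M|_{E'}$, and the paving hypothesis on this finite restriction forces $|C| \in \{r, r+1\}$.

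I do not anticipate a genuine obstacle here: the only subtlety is making sure that circuits and ranks of restrictions of finitary matroids of finite rank behave exactly as in the finite case, which is built into the finitary cryptomorphism recalled in Section \ref{sec:paving} (bases are finite of common cardinality, and circuits are finite minimal dependent sets).
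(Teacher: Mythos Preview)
Your proof is correct and follows the same strategy as the paper: use that circuits of $M|_{E'}$ are exactly the circuits of $M$ contained in $E'$, and for the converse adjoin a basis $B$ to the circuit $C$ to force the restriction to have full rank $r$. Your forward direction is in fact more carefully written than the paper's, since you explicitly verify $r' \le |C| \le r'+1$ rather than tacitly assuming $\rank(M|_{E'}) = \rank(M)$.
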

\begin{proof}
Suppose $M$ is a paving matroid of rank-$d$ on the set $E$. For any finite subset $E'\subset E$, the circuits of $M|_{E'}$ are the circuits of $M$ contained within $E'$. Since all the circuits of $M$ are of size $d$ or $d+1$, this also holds for $M_{E'}$.\newpar
Conversely, suppose that $M$ is a rank $d$ matroid for which every finite restriction $M|_{E'}$ is a paving matroid, but $M$ is not a paving matroid. Then $M$ contains a circuit $C$ of size at most $d-1$. 
Let $B$ be a basis of $M$. The restriction $M|_{B\cup C}$ then has rank $d$, and has a circuit of size $d-1$. This means that $M|_{B\cup C}$ is not a paving matroid, a contradiction.
\end{proof}

\begin{definition}
A zero-dimensional tropical ideal $I$ is a \defbold{paving tropical ideal} if
the underlying matroid $\uMat(I)$ is a paving matroid, i.e., 
all polynomials of minimal support in $I$ have support of size equal 
to $\deg(I)$ or $\deg(I) + 1$. Equivalently, $I$ is a paving tropical ideal if and only if 
the matroid $\uMat(I|_E)$ is a paving matroid for any finite set of monomials $E$.
\end{definition}

Finite paving matroids of rank $d+1$ on a set $E$ are known to be in correspondence with 
collections of subsets of $E$ called $d$-partitions.
We now extend this correspondence to include infinite paving matroids.

\begin{definition} 
Let $E$ be a (possibly infinite) set with at least $d+1$ elements. 
A \defbold{$d$-partition} of $E$ is a collection $\mathcal P$ of subsets of $E$ 
satisfying
\begin{enumerate}
    \item[(P1)] $|\mathcal P|\geq 2$,
    \item[(P2)] $|S|\geq d$ for any $S \in \mathcal P$, and
    \item[(P3)] each $d$-element subset of $E$ appears in a unique $S \in \mathcal P$.
\end{enumerate}
A subset $S \in \mathcal P$ is called a \defbold{block} of $\mathcal P$.
\end{definition}

Note that $d$-partitions are a generalization of the usual notion of partition of a set;
indeed, a partition of a set $E$ is the same as a $1$-partition of $E$ (with the exception of the 
trivial partition $\{E\}$).

A \defbold{hyperplane} of a finite-rank matroid $M$ is a maximal non-spanning subset of $M$, i.e., a maximal subset of rank $\rank(M)-1$.
We denote the collection of hyperplanes of $M$ by $\cH(M)$; this collection is enough to determine the matroid $M$.

For completeness, we recall the axioms that characterize collections of hyperplanes of
finite-rank matroids; see for example \cite{White86}*{Proposition 2.4.2}.
\begin{proposition}
A collection $\cH$ of subsets of a (possibly infinite) set $E$ is the collection of hyperplanes of a finite-rank
matroid over $E$ if and only if $\cH$ satisfies
\begin{enumerate}
    \item[(H1)] $E \notin \cH$.
    \item[(H2)] Any two subsets in $\cH$ are incomparable.
    \item[(H3)] For every $H_1, H_2 \in \cH$ with $H_1 \neq H_2$, and every $x \in E$,
    there exists $H_3 \in \cH$ such that $(H_1 \cap H_2) \cup \{x\} \subset H_3$.
    \item[(HF)] If a subset $X \subset E$ satisfies $X \not\subset H$ for all $H \in \cH$, 
    then there exists a finite $X' \subset X$ satisfying $X' \not\subset H$ for all $H \in \cH$.
\end{enumerate}
\end{proposition}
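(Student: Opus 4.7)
The plan splits into a routine forward direction and a more delicate reverse direction, with axiom (HF) serving precisely to handle the transition from the classical finite-ground-set cryptomorphism (cited in the paper as \cite{White86}*{Proposition 2.4.2}) to possibly infinite $E$.

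For the forward direction, I would verify each axiom directly from matroid theory. Axiom (H1) holds because $E$ is spanning, not a hyperplane; (H2) because hyperplanes are by definition maximal proper flats; and (H3) because for distinct $H_1, H_2 \in \cH(M)$, the intersection $H_1 \cap H_2$ has rank $\rank(M) - 2$, so $(H_1 \cap H_2) \cup \{x\}$ has rank at most $\rank(M) - 1$ and is therefore contained in a maximal proper flat, i.e.\ a hyperplane (finite rank guarantees the existence of this maximal superset). For (HF), if $X \subset E$ is not contained in any hyperplane then $X$ spans $M$; since $\rank(M) < \infty$, there is a finite $X' \subset X$ already achieving full rank, and such an $X'$ cannot sit inside any hyperplane either.

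For the reverse direction, given $\cH$ satisfying (H1)--(HF), I would define a closure operator
$$\overline{X} \;:=\; \bigcap \{H \in \cH : X \subset H\},$$
with the convention that $\overline X = E$ when no such $H$ exists, and declare $X \subset E$ to be independent if for every finite $X' \subset X$ and every $x \in X'$, $x \notin \overline{X' \setminus \{x\}}$. Extensivity, monotonicity, and idempotency of $\overline{\cdot}$ are immediate. The technical heart is the Mac Lane--Steinitz exchange axiom: given $y \in \overline{X \cup \{x\}} \setminus \overline{X}$, pick a witness $H_1 \supset X$ with $y \notin H_1$ (forcing $x \notin H_1$, else $y \in \overline{X\cup\{x\}} \subset H_1$); to show $x \in \overline{X \cup \{y\}}$, assume for contradiction that some $H_2 \supset X \cup \{y\}$ satisfies $x \notin H_2$, apply (H3) to $H_1, H_2, x$ to produce $H_3 \supset (H_1 \cap H_2) \cup \{x\} \supset X \cup \{x\}$ (hence $y \in H_3$), and derive the contradiction by iterating (H3) on new pairs among $\{H_1, H_2, H_3\}$ together with elements in the symmetric differences. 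This is the step I expect to require the most care; (H3) is exactly the hyperplane translation of the cocircuit elimination axiom for $\{E \setminus H : H \in \cH\}$, which is how the finite case in \cite{White86} is handled, so at worst one can invoke (HF) to restrict attention to finite ground sets containing $X \cup \{x,y\}$ and any needed witnesses, and transport the finite cryptomorphism.

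Once the closure operator is in hand, I would check that the resulting finitary matroid $M$ has $\cH(M) = \cH$: every $H \in \cH$ is a proper closed set (by (H1)), and maximality among proper closed sets follows from (H2), so $H$ is a hyperplane of $M$; conversely, any matroid hyperplane is a proper flat, hence equals an intersection of elements of $\cH$ containing it, and by maximality and (H2) this intersection must be a single element of $\cH$. Finally, finite rank of $M$ follows from (HF) applied to $X = E$: since $E \notin \cH$, (HF) yields a finite $X' \subset E$ not contained in any hyperplane, i.e.\ a finite spanning set, so $\rank(M) \leq |X'| < \infty$.
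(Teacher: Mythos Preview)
The paper does not actually prove this proposition: it is stated ``for completeness'' with a citation to \cite{White86}*{Proposition~2.4.2}, and no argument is given. So there is nothing in the paper to compare your proof against.

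On the merits of your sketch: the forward direction and the final two paragraphs (identifying $\cH(M)$ with $\cH$, and extracting finite rank from (HF) applied to $X=E$) are clean and correct. The gap is exactly where you flag it. Your direct attack on exchange stalls: after producing $H_3 \supset X\cup\{x,y\}$ you propose to ``iterate (H3) on new pairs among $\{H_1,H_2,H_3\}$'', but each further application of (H3) only manufactures another hyperplane containing $X\cup\{x\}$ or $X\cup\{y\}$ or $X\cup\{x,y\}$; nothing forces a contradiction, and there is no evident well-founded quantity decreasing. Your fallback---``invoke (HF) to restrict to a finite ground set and transport the finite cryptomorphism''---is also not justified as written: passing from $\cH$ to $\{H\cap E' : H\in\cH\}$ for a finite $E'$ need not preserve (H2) (distinct hyperplanes can have comparable, even equal, traces on $E'$), so you cannot simply quote the finite-ground-set theorem on $E'$.

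A cleaner route for the reverse direction is to dualize: set $\mathcal C^* := \{E\setminus H : H\in\cH\}$ and observe that (H1)--(H3) translate exactly into the statement that $\mathcal C^*$ is a clutter of nonempty sets satisfying weak circuit elimination. One then builds the matroid from the \emph{bases} side rather than the closure side: by (HF) there is a finite $B_0\subset E$ meeting every $C^*\in\mathcal C^*$; take $B$ minimal with this property, and show that all such minimal finite transversals have the same size and form the bases of a finite-rank matroid whose hyperplanes are $\cH$. This sidesteps the exchange verification entirely and uses (HF) in the one place it is genuinely needed, namely to produce a finite basis.
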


The following correspondence is well-known in the case of matroids on a finite ground set,
and is sometimes provided as the definition of a paving matroid \cite{Welsh76}*{Chapter 2.3}.

\begin{proposition}\label{prop:matroidCorrespondence}
There is a one-to-one correspondence between rank-$(d+1)$ paving matroids 
on a (possibly infinite) set $E$ and $d$-partitions of $E$. 
Concretely, if $M$ is a paving matroid on $E$ of rank $d+1$ then 
its collection of hyperplanes $\cH(M)$ is a $d$-partition of $E$. 
Conversely, for any $d$-partition $\cP$ of $E$ there is a paving matroid $M(\cP)$ on $E$ 
of rank $d+1$ whose collection of hyperplanes is equal to $\cP$.
\end{proposition}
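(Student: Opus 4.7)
The plan is to build both directions of the bijection by using the hyperplane axiomatization of finite-rank matroids. Since the hyperplane collection determines a matroid, it suffices to verify that (a) if $M$ is a rank-$(d+1)$ paving matroid on $E$ then $\cH(M)$ is a $d$-partition, and (b) if $\cP$ is a $d$-partition of $E$ then $\cP$ satisfies (H1)--(HF), producing a matroid $M(\cP)$ of rank $d+1$ whose hyperplane collection is exactly $\cP$. For (a), axiom (P2) is immediate since each hyperplane has rank $d$ and hence contains an independent $d$-subset. Axiom (P3) reduces to two observations: by the paving hypothesis all circuits have size at least $d+1$, so every $d$-subset $T$ is independent and its closure is a rank-$d$ flat contained in some hyperplane; and if $T$ lay in two distinct hyperplanes $H_1 \neq H_2$, then $\rank(H_1 \cap H_2) \geq d$ would force $H_1 = H_2$ by maximality of hyperplanes, a contradiction. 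For (P1), any basis $\{b_0, \dots, b_d\}$ produces two distinct hyperplanes, one containing $\{b_1,\dots, b_d\}$ (which cannot contain $b_0$ on pain of spanning) and one containing $\{b_0, b_2, \dots, b_d\}$ (which cannot contain $b_1$).

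For the reverse direction, I would verify the hyperplane axioms (H1)--(H3) directly from (P1)--(P3). If $E$ were itself a block then any $d$-subset of $E$ would lie in both $E$ and the other block provided by (P1), violating (P3); this gives (H1). Comparability $S_1 \subsetneq S_2$ is ruled out by applying the same trick to a $d$-subset of $S_1$, giving (H2). For (H3), I would first note that distinct blocks satisfy $|S_1 \cap S_2| \leq d-1$ (again by (P3) applied to a $d$-subset of the intersection), so $(S_1 \cap S_2) \cup \{x\}$ has at most $d$ elements and can be completed to a $d$-subset sitting in a unique block $S_3$. Once $M(\cP)$ is constructed, rank-$(d+1)$-ness follows because every $d$-subset lies in some hyperplane (capping rank at $d+1$), while any $d$-subset $T \subset S$ extended by an element outside $S$ (possible because $E \notin \cP$) yields a $(d+1)$-independent set. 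The paving property is then automatic: by construction every rank-$d$ subset sits in a hyperplane, so any $(d+1)$-subset is either independent or contained in a single block, forcing all circuits to have size exactly $d+1$ or $d+2$. The two constructions are manifestly mutually inverse since both pass through the common data of the hyperplane collection.

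I expect the main obstacle to be the finitary hyperplane axiom (HF), which is the only genuinely new ingredient beyond the classical finite case and which must be extracted from (P3). The plan for (HF) is as follows: given $X \subset E$ with $X \not\subset S$ for every $S \in \cP$, pick a $d$-subset $T \subset X$, let $S_T \in \cP$ be the unique block containing $T$ (via (P3)), and choose $y \in X \setminus S_T$ (available because $X \not\subset S_T$); then $X' := T \cup \{y\}$ is a finite witness, since any block $S \supset X'$ would satisfy $T \subset S$ and hence $S = S_T$ by uniqueness, contradicting $y \in X' \subset S$. A small edge case is that $X$ might a priori have fewer than $d$ elements, but every such $X$ is contained in a block (extend to a $d$-subset and apply (P3)), so the hypothesis $X \not\subset S$ for all $S \in \cP$ already forces $|X| \geq d$ and the argument goes through.
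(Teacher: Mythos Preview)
Your proposal is correct and follows essentially the same route as the paper: verify (P1)--(P3) for $\cH(M)$ in the forward direction, and (H1)--(HF) for $\cP$ in the reverse, with your (HF) argument matching the paper's almost verbatim. One small slip to fix in the rank computation: you have the two bounds swapped---``every $d$-subset lies in a hyperplane'' shows such sets are non-spanning and hence gives the \emph{lower} bound $\rank(M(\cP)) \geq d+1$, while the $(d+1)$-set $T \cup \{y\}$ lying in no block is \emph{spanning} (not merely independent), which is what yields $\rank(M(\cP)) \leq d+1$.
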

\begin{proof}
Suppose that $M$ is a paving matroid on $E$ of rank $d+1$. 
Hyperplanes of $M$ are flats of rank $d$. 
As $M$ has rank at least $2$, the collection of hyperplanes $\cH(M)$ satisfies the $d$-partition axioms (P1) and (P2). 
To show that $\cH(M)$ satisfies (P3), take $S \subset E$ of size $d$. 
Since all the circuits of $M$ have size $d+1$ or $d+2$, the subset $S$ is independent in $M$.
The only hyperplane that contains $S$ is thus $\spann(S)$, showing that $\cH(M)$ satisfies (P3).

Conversely, suppose that $\cP$ is a $d$-partition of $E$. Axiom (P3) ensures that any
two subsets in $\cP$ are incomparable, and thus $\cP$ satisfies the hyperplane axioms
(H1) and (H2). 
To show that $\cP$ satisfies (H3), take distinct $S_1,S_2 \in \cP$, and $x \in E$.
By axiom (P3) we have that $|(S_1 \cap S_2) \cup \{x\}| \leq d$, and thus 
$(S_1 \cap S_2) \cup \{x\}$ must be contained in some $S_3 \in \cP$. 
For axiom (HF), we must show that if $X \subset E$ is infinite and $X \not\subset S$ for all $S\in \cP$ then $X$ contains a finite subset $X'$ with this property. 
Take $Y \subset X$ of size $d$. By (P3), we have $Y \subset S$ for a unique $S \in \cP$.
Since $X \not\subset S$, there exists $x \in X \setminus S$. The set $X' := Y \cup \{x\}$ 
is thus not contained in any $S' \in \cP$, as otherwise $Y$ would be contained in both $S$
and $S'$.
This shows that $\cP$ is the collection of hyperplanes of a finite-rank matroid $M$ on $E$.  
Finally, the fact that every $d$-subset of $E$ is contained in a unique hyperplane of $M$
implies that $M$ has rank $d+1$ and every $d$-subset of $E$ is independent in $M$,
which means that $M$ is a paving matroid.
\end{proof} 
 
We now use the previous general correspondence between paving matroids and $d$-partitions 
to encode paving tropical ideals as special $d$-partitions of $\ZZ^n$. 

The action of the additive group $\ZZ^n$ on itself induces an action of $\ZZ^n$ on 
subsets of $\ZZ^n$, namely, if $\bfu \in \ZZ^n$ and $S \subset \ZZ^n$ then  
$$\bfu + S := \{ \bfu + \bfv : \bfv \in S\}.$$
This in turn induces an action of $\ZZ^n$ on collections of subsets
of $\ZZ^n$: If $\bfu \in \ZZ^n$ and $\cP \subset 2^{\ZZ^n}$, the collection
$$\bfu + \cP := \{ \bfu + S : S \in \cP\}$$
is another collection of subsets of $\ZZ^n$.
We say that $\cP \subset 2^{\ZZ^n}$ is \defbold{invariant} under the action of $\ZZ^n$ if $\bfu + \cP = \cP$ for all $\bfu \in \ZZ^n$.
In a similar way, the action of $\ZZ^n$ on itself induces a natural action of $\ZZ^n$ on finitary matroids on the ground set $\ZZ^n$.

\begin{theorem}\label{thm:correspondenceThm}
There is a bijection between tropical ideals in 
$\BB[x_1^{\pm 1},\dots,x_n^{\pm 1}]$ and finitary matroids on $\ZZ^n$ that 
are invariant under the action of $\ZZ^n$, sending a tropical ideal $I$
to its underlying matroid $\uMat(I)$.

Consequently, there is a bijection between degree-$(d+1)$ 
paving tropical ideals in $\BB[x_1^{\pm 1},\dots,x_n^{\pm 1}]$ 
and $d$-partitions of $\ZZ^n$ that are invariant under the action of $\ZZ^n$, 
sending a paving tropical ideal $I$ to the collection of hyperplanes $\cH(\uMat(I))$ 
of its underlying matroid.
\end{theorem}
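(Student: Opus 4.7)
The plan is to establish the first bijection -- between tropical ideals in $\BB[x_1^{\pm 1},\dots,x_n^{\pm 1}]$ and $\ZZ^n$-invariant finitary matroids on $\ZZ^n$ -- and then deduce the second statement as a direct consequence of Proposition \ref{prop:matroidCorrespondence}. The forward map $I \mapsto \uMat(I)$ is already defined. For any $\bfu \in \ZZ^n$, multiplication by $\bfx^{\bfu}$ is an invertible operation preserving $I$ that translates the support of every polynomial by $\bfu$; hence it sends minimal supports in $I$ to minimal supports in $I$, so the circuits of $\uMat(I)$ are permuted by translation, showing $\uMat(I)$ is $\ZZ^n$-invariant.

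To construct the inverse, given a $\ZZ^n$-invariant finitary matroid $M$ on $\ZZ^n$, I would define
$$
I(M) := \{\, f \in \BB[x_1^{\pm 1},\dots,x_n^{\pm 1}] : \supp(f) \text{ is a cycle of } M \,\},
$$
with the zero polynomial corresponding to the empty cycle. Since a Boolean polynomial is determined by its support, this is unambiguous. I would then verify that $I(M)$ is a tropical ideal in three clean steps: closure under tropical sum uses $\supp(f \oplus g) = \supp(f) \cup \supp(g)$ and the fact that a union of cycles is a cycle; closure under multiplication by a monomial $\bfx^\bfu$ uses that $\bfu + \supp(f)$ is again a cycle by the $\ZZ^n$-invariance of $M$; and the tropical ideal axiom is immediate because, for any finite $E \subset \ZZ^n$, one has $\supp(I(M)|_E) = \{\text{cycles of } M \text{ contained in } E\} = \{\text{cycles of } M|_E\}$, which is the cycle set of a bona fide (finite) matroid on $E$.

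Next I would check the two maps are mutually inverse. First, $\uMat(I(M)) = M$, because the circuits of $\uMat(I(M))$ are the minimal cycles of $M$, which are exactly the circuits of $M$. Conversely, if $f \in I(\uMat(I))$, then $\supp(f)$ is a cycle of $\uMat(I)$, so $\supp(f) = \supp(f_1) \cup \cdots \cup \supp(f_k)$ for some $f_i \in I$ whose supports are circuits of $\uMat(I)$; since $f$ and $f_1 \oplus \cdots \oplus f_k$ are Boolean polynomials with the same support, $f = f_1 \oplus \cdots \oplus f_k \in I$. The reverse containment $I \subset I(\uMat(I))$ holds because every support in $I$ is a cycle of $\uMat(I)$ by definition.

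For the second statement, I would restrict the bijection above to $\ZZ^n$-invariant paving matroids on $\ZZ^n$ of rank $d+1$; these correspond exactly to paving tropical ideals of degree $d+1$, since the rank of $\uMat(I)$ equals $\deg(I)$ for zero-dimensional $I$. Proposition \ref{prop:matroidCorrespondence} then supplies a bijection between such paving matroids and $d$-partitions of $\ZZ^n$ via $M \mapsto \cH(M)$, and since hyperplanes are intrinsic to the matroid structure, $\ZZ^n$-invariance of $M$ is equivalent to $\ZZ^n$-invariance of $\cH(M)$. The proof is largely bookkeeping; the only point that requires care is the identification $\text{cycles}(M|_E) = \{\text{cycles of } M \text{ contained in } E\}$, which is what lets the tropical ideal axiom for $I(M)$ follow with no appeal to an infinite version of circuit elimination.
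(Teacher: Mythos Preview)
Your proof is correct and follows essentially the same approach as the paper's: both define the inverse map by sending a $\ZZ^n$-invariant finitary matroid $M$ to the set of Boolean polynomials whose support is a cycle of $M$, and both deduce the paving statement from Proposition~\ref{prop:matroidCorrespondence}. Your write-up simply makes explicit several verifications the paper leaves tacit --- that $I(M)$ is closed under $\oplus$ and under monomial multiplication, that the two maps are mutual inverses, and that $\ZZ^n$-invariance of $M$ is equivalent to $\ZZ^n$-invariance of $\cH(M)$.
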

\begin{proof}
If $I$ is a tropical ideal in $\BB[x_1^{\pm 1},\dots,x_n^{\pm 1}]$ then its underlying 
matroid $\uMat(I)$ is a finitary matroid on the set $\ZZ^n$. Since $I$ is invariant
under multiplication by any Laurent monomial, the matroid $\uMat(I)$ is invariant under 
the action of $\ZZ^n$. In addition, note that any $\ZZ^n$-invariant finitary matroid $M$ on $\ZZ^n$ 
is the underlying matroid of a unique tropical ideal $I \subset \BB[x_1^{\pm 1},\dots,x_n^{\pm 1}]$, namely,
the ideal consisting of polynomials of the form $f= \bigoplus_{\bfu \in C} \bfx^\bfu$
with $C$ a cycle (i.e. union of circuits) of $M$. This proves the first part of the theorem.

By definition, a tropical ideal $I \subset \BB[x_1^{\pm 1},\dots,x_n^{\pm 1}]$ is a degree-$(d+1)$ paving tropical ideal if and only if $\uMat(I)$ is a $\ZZ^n$-invariant rank-$(d+1)$ paving matroid on $\ZZ^n$. By Proposition \ref{prop:matroidCorrespondence}, 
such matroids are in bijection with $\ZZ^n$-invariant $d$-partitions of $\ZZ^n$, via the map that sends a matroid $\uMat(I)$ to its collection of hyperplanes $\cH(\uMat(I))$. 
\end{proof}

\begin{remark}\label{rem:concrete}
It is worth mentioning without using any matroid terminology what the correspondence in Theorem
\ref{thm:correspondenceThm} is in the case of paving tropical ideals.
If $\cP$ is a $\ZZ^n$-invariant $d$-partition of $\ZZ^n$, 
the degree-$(d+1)$ paving tropical ideal $I \subset \BB[x_1^{\pm 1},\dots,x_n^{\pm 1}]$ corresponding to $\cP$ 
has minimal-support polynomials of the form $f = \bigoplus_{\bfu \in C} \bfx^\bfu$, 
where $C \subset \ZZ^n$ has size $d+1$ and is contained in a block $S \in \cP$, 
or $C$ has size $d+2$ and does not contain any such $(d+1)$-subset.
\end{remark}

\begin{remark}
Tropical ideals in $\BB[x_1^{\pm 1},\dots,x_n^{\pm 1}]$ are in correspondence with tropical 
ideals in $\BB[x_1,\dots,x_n]$ that are saturated with respect to the product of the variables, 
i.e., tropical ideals $I \subset \BB[x_1,\dots,x_n]$ for which $x_i \ttimes f \in I$ implies $f \in I$. 
Specifically, if $I \subset \BB[x_1^{\pm 1},\dots,x_n^{\pm 1}]$ is a tropical ideal then
the intersection $J := I \cap \BB[x_1,\dots,x_n]$ is a saturated tropical ideal in $\BB[x_1,\dots,x_n]$. 
Conversely, if $J \subset \BB[x_1,\dots,x_n]$ is a saturated tropical ideal then the ideal 
$J \, \BB[x_1^{\pm 1},\dots,x_n^{\pm 1}]$ it generates in $\BB[x_1^{\pm 1},\dots,x_n^{\pm 1}]$ is also a tropical ideal, and these two maps are inverses to each other.
More details about this correspondence can be found in \cite{MR}*{Lemma 2.1}.
Theorem \ref{thm:correspondenceThm} thus implies that saturated degree-$(d+1)$ paving tropical ideals in
$\BB[x_1,\dots,x_n]$ are also in bijection with $\ZZ^n$-invariant $d$-partitions of $\ZZ^n$.

In a similar way, tropical ideals in $\BB[x_1^{\pm 1},\dots,x_n^{\pm 1}]$ are also in 
correspondence with homogeneous tropical ideals in $\BB[x_0,\dots,x_n]$ that are 
saturated with respect to the product of all the variables; for details see \cite{MR}*{Lemma 2.1} as well. Again,
Theorem \ref{thm:correspondenceThm} implies that homogeneous saturated degree-$(d+1)$ paving tropical ideals in $\BB[x_0,\dots,x_n]$ are also in
bijection with $\ZZ^n$-invariant $d$-partitions of $\ZZ^n$.
\end{remark}

We conclude this section with a result about which subsets of $\ZZ^n$ can be hyperplanes of a paving tropical ideal. 

\begin{definition}\label{def:sparseHyperplane}
We say a subset $S \subset \Z^n$ is \textbf{$d$-sparse} if there exists no $\bfu \in \ZZ^n\setminus\{\bfzero\}$ such that $|S\cap (\bfu + S)|\geq d$.
Equivalently, $S \subset \Z^n$ is $d$-sparse if whenever $\bfa_1, \dots, \bfa_d, \bfb_1, \dots ,\bfb_d \in S$ satisfy $\bfzero \neq \bfa_1-\bfb_1 = \bfa_2-\bfb_2 = \dots = \bfa_d-\bfb_d$ then $(\bfa_i,\bfb_i)=(\bfa_j,\bfb_j)$ for some $i\neq j$.
\end{definition}

\begin{proposition}\label{prop:latticeBlocks}
Suppose $\cP$ is a $\ZZ^n$-invariant $d$-partition of $\ZZ^n$.
Then any block $S \in \cP$ is either $d$-sparse or a non-trivial affine sublattice of $\ZZ^n$,
i.e. it has the form $S = \bfv + L$ for $\bfv \in \ZZ^n$ and $\{\bfzero\} \subsetneq L \subsetneq \ZZ^n$ a sublattice.
\end{proposition}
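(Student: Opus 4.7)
The plan is to assume $S$ is not $d$-sparse and deduce that $S$ is a non-trivial affine sublattice. The key observation that I will use twice is the following rigidity principle: if two blocks of $\mathcal{P}$ share at least $d$ elements, then they must coincide. Indeed, a $d$-element subset of their intersection is contained in a unique block by (P3), forcing equality.

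First, I would exploit the hypothesis that $S$ is not $d$-sparse to produce translational symmetries of $S$. By definition, there exists $\mathbf{u} \in \ZZ^n \setminus \{\bfzero\}$ with $|S \cap (\mathbf{u}+S)| \geq d$. Since $\mathcal{P}$ is $\ZZ^n$-invariant, $\mathbf{u}+S$ is again a block of $\mathcal{P}$, so by the rigidity principle above $S = \mathbf{u}+S$. Define the stabilizer
\[
L := \{\mathbf{u} \in \ZZ^n : \mathbf{u}+S = S\},
\]
which is a sublattice of $\ZZ^n$; what we just showed is that $L \neq \{\bfzero\}$.

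Next I would rule out $L = \ZZ^n$. If equality held, then for any $\mathbf{v} \in S$ we would have $\mathbf{v} + \ZZ^n \subseteq S$, giving $S = \ZZ^n$; but then (P3) forces every block to coincide with $S$, contradicting (P1). Hence $\{\bfzero\} \subsetneq L \subsetneq \ZZ^n$.

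Finally, I would show that $S$ is a single $L$-coset. Fix any $\mathbf{v} \in S$, so that $\mathbf{v}+L \subseteq S$ by $L$-invariance. Suppose for contradiction there exists $\mathbf{w} \in S \setminus (\mathbf{v}+L)$, and set $\mathbf{u}' := \mathbf{w} - \mathbf{v} \notin L$. Then
\[
\mathbf{u}' + S \;\supseteq\; \mathbf{u}' + (\mathbf{v}+L) \;=\; \mathbf{w}+L \;\subseteq\; S,
\]
so $|S \cap (\mathbf{u}'+S)| \geq |L| = \infty \geq d$. Applying the rigidity principle again forces $\mathbf{u}' \in L$, contradicting our choice of $\mathbf{w}$. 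Thus $S = \mathbf{v}+L$, as required. I do not anticipate a significant obstacle here: the whole argument rests on the single observation that (P3) combined with $\ZZ^n$-invariance upgrades ``overlap of size $\geq d$'' to ``equality of blocks''. The one subtlety worth noting explicitly is that every nontrivial sublattice of $\ZZ^n$ is infinite, which is what makes the overlap in the last step automatically at least $d$.
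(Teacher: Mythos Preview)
Your proof is correct and follows essentially the same approach as the paper: define the stabilizer $L$ of $S$, use (P3) together with $\ZZ^n$-invariance to show that $|S\cap(\bfu+S)|\geq d$ forces $\bfu\in L$, and then conclude $S=\bfv+L$ by showing any two elements of $S$ differ by something in $L$. Your write-up is in fact slightly more careful, since you explicitly rule out $L=\ZZ^n$, a point the paper leaves implicit.
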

\begin{proof}
Fix a block $S \in \cP$. As $\cP$ is $\ZZ^n$-invariant, for any $\bfu \in \ZZ^n$
we have $\bfu + S \in \cP$. Note that axioms (P2) and (P3) for $d$-partitions imply that 
$\bfu + S = S$ if and only if $|S\cap (\bfu+S)|\geq d$. 
The stabilizer of $S$ under the action of $\ZZ^n$ on the blocks of $\cP$ is then 
$$L := \{ \bfu \in \ZZ^n : \bfu + S = S \} = \{ \bfu \in \ZZ^n : |S\cap (\bfu + S)|\geq d \},$$ 
which is a subgroup of $\ZZ^n$. 
The case when $L$ is the trivial lattice $L = \{\bfzero\}$ 
is exactly the case when $S$ is $d$-sparse. 
If $L$ is non-trivial, fix $\bfv \in S$. 
Note that $S \supset \bfv + L$. For any $\bfw \in S$, we also have $S \supset \bfw + L$,
and thus $(\bfv - \bfw) + S \supset \bfv + L$. 
This means that $S \cap ((\bfv - \bfw) + S) \supset \bfv + L$, 
so $|S \cap ((\bfv - \bfw) + S)| \geq |\bfv + L| = \infty \geq d$ and therefore $\bfv - \bfw \in L$. 
It follows that $S = \bfv + L$, as desired.
\end{proof}

In view of Theorem \ref{thm:correspondenceThm}, Proposition \ref{prop:latticeBlocks}
says that any hyperplane of a degree-$(d+1)$ paving tropical ideal is either $d$-sparse
or an affine sublattice of $\ZZ^n$.
In Section \ref{sec:applications} we will provide examples and applications based
on constructing paving matroids with specified $d$-sparse hyperplanes.

\section{Zero-dimensional tropical ideals of low degree}\label{sec:lowrank}

In this section we use the setup developed in Section \ref{sec:paving} to better understand
zero-dimensional tropical ideals in $\B[x_1^{\pm 1},\dots,x_n^{\pm 1}]$ of degree at most $3$.  

The case of zero-dimensional tropical ideals of degree at most $2$ 
was studied by Zajaczkowska in her PhD thesis \cite{Zaj18}. 
There she showed that all zero-dimensional tropical ideals in $\Rbar[x_1^{\pm 1},\dots,x_n^{\pm 1}]$ 
of degree $1$ are realizable, and are of the form $\trop(\langle x_1 - a_1, \dots, x_n- a_n \rangle)$ where
$a_1, \dots, a_n$ are elements of a valued field $K$. (Note that this statement includes the case of tropical ideals
with non-Boolean coefficients.) In fact, these ideals are exactly the maximal tropical ideals
of $\Rbar[x_1^{\pm 1},\dots,x_n^{\pm 1}]$, as shown in \cite{MR18}*{Example 5.19}.

Zero-dimensional tropical ideals of degree $2$ with Boolean coefficients were 
shown in \cite{Zaj18} to be in correspondence with sublattices of $\ZZ^n$. It was also
proved that in the case of $n=1$ they are all realizable, although as we will see in the next section,
this is not always the case if $n > 1$. 
The case of zero-dimensional tropical ideals of degree $2$ in one variable with coefficients in $\Rbar$
was studied in depth in \cite{FGG}, where the authors show that, while these tropical
ideals are naturally parametrized by points in a certain infinite-dimensional simplicial complex, 
the realizable ones correspond only to a 1-dimensional subcomplex.

The correspondence between zero-dimensional tropical ideals of degree $2$ and 
sublattices of $\ZZ^n$ follows directly from our results in Section \ref{sec:paving},
as we now explain.

\begin{proposition}\label{prop:degree2}
There is a bijection between zero-dimensional degree-$2$ tropical ideals in 
$\B[x_1^{\pm 1},\dots,x_n^{\pm 1}]$ and proper sublattices $L \subsetneq \ZZ^n$. Under this
correspondence, the tropical ideal associated to a sublattice $L$ has minimal-support polynomials
of the form $\bfx^\bfu \tplus \bfx^\bfv$ with $\bfu - \bfv \in L$, and 
$\bfx^\bfu \tplus \bfx^\bfv \tplus \bfx^\bfw$ with no pairwise difference between $\bfu,\bfv,\bfw$ in $L$. 
\end{proposition}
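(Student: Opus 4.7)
The strategy is to apply Theorem \ref{thm:correspondenceThm} in the case $d = 1$, so the first step is to check that every zero-dimensional degree-$2$ tropical ideal $I \subset \BB[x_1^{\pm 1},\dots,x_n^{\pm 1}]$ is automatically a paving tropical ideal. Since the underlying matroid $\uMat(I)$ has rank $\deg(I)=2$, any three monomials form a dependent set and so every circuit has size at most $3$. A circuit of size $1$ would be a loop $\{\bfu\}$, corresponding to a monomial $\bfx^\bfu \in I$; but invariance under multiplication by Laurent monomials would then force every monomial to lie in $I$, making every subset of $\ZZ^n$ dependent and contradicting $\deg(I)=2$. Hence all circuits have size $2$ or $3$, so $I$ is paving.

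Applying Theorem \ref{thm:correspondenceThm} with $d=1$, zero-dimensional degree-$2$ tropical ideals are in bijection with $\ZZ^n$-invariant $1$-partitions of $\ZZ^n$. A $1$-partition is simply a partition into at least two non-empty parts, and the remaining task is to identify these $\ZZ^n$-invariant partitions with proper sublattices $L \subsetneq \ZZ^n$ via the coset partition $\cP_L := \{\bfv + L : \bfv \in \ZZ^n\}$. Any coset partition is clearly $\ZZ^n$-invariant, is a $1$-partition exactly when $L \neq \ZZ^n$, and distinct sublattices yield distinct partitions.

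For the converse, I would invoke Proposition \ref{prop:latticeBlocks}: each block is either $1$-sparse or a non-trivial affine sublattice. A $1$-sparse set cannot contain two distinct points $\bfv \neq \bfw$ (otherwise $\bfu := \bfv - \bfw \neq \bfzero$ gives $\bfv \in S \cap (\bfu + S)$), so $1$-sparse blocks are singletons. If all blocks are singletons, the partition is $\cP_{\{\bfzero\}}$. Otherwise some block is $\bfv + L$ for a non-trivial sublattice $L$; by $\ZZ^n$-invariance every coset of $L$ is then a block, and since the cosets already cover $\ZZ^n$ these exhaust $\cP$. This yields the claimed bijection.

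The explicit description of minimal-support polynomials then follows directly from Remark \ref{rem:concrete} with $d=1$: size-$2$ minimal supports are pairs $\{\bfu,\bfv\}$ lying in a common coset, i.e.\ with $\bfu - \bfv \in L$; size-$3$ minimal supports are triples $\{\bfu,\bfv,\bfw\}$ containing no such pair, i.e.\ with all three pairwise differences outside $L$. I do not expect any serious obstacle; the only mild subtlety is handling the boundary case $L = \{\bfzero\}$, which corresponds to the partition into singletons and to the rank-$2$ uniform matroid on $\ZZ^n$ whose circuits are exactly the triples of monomials.
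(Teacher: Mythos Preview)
Your proposal is correct and follows essentially the same approach as the paper: both reduce to Theorem~\ref{thm:correspondenceThm} with $d=1$ after observing that degree-$2$ ideals are paving (no monomials), invoke Proposition~\ref{prop:latticeBlocks} to show the blocks of a $\ZZ^n$-invariant $1$-partition are cosets of a sublattice, and read off the minimal-support description from Remark~\ref{rem:concrete}. The only cosmetic difference is that the paper directly takes $L$ to be the block containing $\bfzero$ and treats the singleton and non-singleton cases uniformly, whereas you split into the all-singletons case versus the case where some block is a non-trivial affine sublattice; both arguments are equally valid.
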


\begin{proof}
Note that zero-dimensional degree-$2$ tropical ideals of $\B[x_1^{\pm 1},\dots,x_n^{\pm 1}]$ 
contain no monomials, and thus they are all paving tropical ideals. 
It follows from Theorem \ref{thm:correspondenceThm} that they are in bijection with $\ZZ^n$-invariant
$1$-partitions (i.e. non-trivial partitions) of $\Z^n$. 
Suppose $\cP$ is a $\ZZ^n$-invariant partition of $\Z^n$, and let $L \in \cP$ be the block containing $\bfzero$. 
If $L$ has size at least $2$ then $L$ is not $1$-sparse, so by Proposition \ref{prop:latticeBlocks}, $L$ is a sublattice of $\ZZ^n$. 
The same is true, of course, if $|L| = 1$.
As $\cP$ is a partition that is invariant under the action of $\ZZ^n$, it follows that
$\cP$ consists of all the translates of $L$.
This shows that non-trivial $\ZZ^n$-invariant partitions of $\Z^n$ are in bijection with proper sublattices of $\ZZ^n$.
Finally, the concrete correspondence in the statement follows from Remark \ref{rem:concrete}.
\end{proof}

This approach can be extended to study all zero-dimensional degree-$3$ tropical ideals with Boolean coefficients, based on the fact that all rank-$3$ matroids without loops are paving matroids once parallel elements are identified.

Suppose $M$ is a finitary matroid on the ground set $E$ with no loops. 
Two elements $a,b \in E$ are called \defbold{parallel} in $M$, denoted $a \sim b$, if $a=b$ or $\{a,b\}$ is a circuit of $M$. 
This parallelism relation $\sim$ is an equivalence relation on $E$. 
The fact that a subset $\{a_1, \dots, a_k\} \subset E$ containing no parallel elements is a circuit of $M$ depends only on the equivalence classes $[a_i]$ of the $a_i$, that is, $\{a_1, \dots, a_k\}$ is a circuit of $M$ if and only if $\{b_1, \dots, b_k\}$ is a circuit of $M$ whenever $b_i \sim a_i$ for all $i$. 
The \defbold{simplification} of the matroid $M$, denoted by $\si(M)$, is the finitary matroid whose ground set is the set of equivalence classes $E/\!\sim$, and whose circuits are the subsets $\{[a_1], \dots, [a_k]\}$ for which $\{a_1, \dots, a_k\}$ is a circuit of $M$.

\begin{definition}
Let $I$ be a tropical ideal in $\BB[x_1^{\pm 1},\dots,x_n^{\pm 1}]$. Its \defbold{binomial lattice} is the sublattice of $\ZZ^n$ given by
\[L_I := \{ \bfu - \bfv \in \ZZ^n : \bfx^\bfu \tplus \bfx^\bfv \in I\} \cup \{\bfzero\}. \]
The fact that $L_I$ is indeed a sublattice follows directly from the monomial elimination axiom for $I$.
\end{definition}

\begin{theorem}\label{thm:simplification}
There is a one-to-one correspondence between zero-dimensional tropical ideals in $\BB[x_1^{\pm 1},\dots,x_n^{\pm 1}]$ of degree 3 and pairs $(L, \cP)$, where $L$ is a sublattice of $\ZZ^n$ and $\cP$ is a $2$-partition of the quotient group $\ZZ^n/L$ which is invariant under the action of $\ZZ^n$.
This correspondence sends a zero-dimensional tropical ideal $I$ of degree $3$ to the pair $(L_I, \cH(\si(\uMat(I))))$.
\end{theorem}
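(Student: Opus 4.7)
The plan is to leverage Theorem \ref{thm:correspondenceThm} and Proposition \ref{prop:matroidCorrespondence}, reducing the statement to the observation that a $\ZZ^n$-invariant rank-3 finitary matroid on $\ZZ^n$ without loops is entirely encoded by (a) its parallelism sublattice and (b) its simplification, viewed as a rank-3 paving matroid on the quotient group. Concretely, the forward map sends $I$ to $(L_I, \cH(\si(\uMat(I))))$, and the inverse map parallel-extends the paving matroid recovered from $\cP$ along the cosets of $L$ and then invokes Theorem \ref{thm:correspondenceThm}.

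For the forward map, given a degree-3 zero-dimensional tropical ideal $I$, I would first note that $M := \uMat(I)$ has rank 3 and no loops: a loop would mean some monomial lies in $I$, and $\ZZ^n$-invariance would then force every monomial into $I$, contradicting $\deg(I) = 3$. Two elements $\bfu, \bfv$ are parallel in $M$ precisely when $\bfx^\bfu \tplus \bfx^\bfv \in I$, i.e. when $\bfu - \bfv \in L_I$, so the parallel classes of $M$ are the cosets of $L_I$ and $\si(M)$ sits naturally on $\ZZ^n/L_I$. Since $\si(M)$ has rank 3 with no loops, no parallels, and circuits of size at most $\rank + 1 = 4$, its circuits have size 3 or 4, making it a rank-3 paving matroid. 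The $\ZZ^n$-action on $M$ descends (through $\ZZ^n/L_I$) to a $\ZZ^n$-action on $\si(M)$, so by Proposition \ref{prop:matroidCorrespondence} we obtain a $\ZZ^n$-invariant 2-partition $\cP := \cH(\si(M))$ of $\ZZ^n/L_I$.

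For the inverse, given a pair $(L, \cP)$ I would apply Proposition \ref{prop:matroidCorrespondence} to obtain a $\ZZ^n$-invariant rank-3 paving matroid $M'$ on $\ZZ^n/L$ with $\cH(M') = \cP$, and then parallel-extend $M'$ along $L$-cosets to a rank-3 finitary matroid $M$ on $\ZZ^n$: declare $\{\bfu, \bfv\}$ a circuit of $M$ whenever $\bfu - \bfv \in L \setminus \{\bfzero\}$, and lift each circuit of $M'$ to the family of $k$-subsets of $\ZZ^n$ whose images in $\ZZ^n/L$ are distinct and form that circuit. Because $L$ is a subgroup and $M'$ is $\ZZ^n$-invariant, $M$ is $\ZZ^n$-invariant; Theorem \ref{thm:correspondenceThm} then produces a unique tropical ideal $I \subset \BB[x_1^{\pm 1},\dots,x_n^{\pm 1}]$ with $\uMat(I) = M$, of degree $\rank(M) = 3$. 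Verifying the two constructions are mutually inverse is then routine: $L_I = L$ since the 2-element circuits of $M$ are by design the pairs differing by a nonzero element of $L$; $\si(\uMat(I)) = M'$ by construction, so its hyperplanes recover $\cP$; and in the opposite direction, the rebuilt matroid has the same parallel classes and same simplification as $\uMat(I)$, hence equals $\uMat(I)$, and uniqueness in Theorem \ref{thm:correspondenceThm} promotes this to an equality of tropical ideals.

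The main step requiring care is confirming that the parallel extension in the inverse construction genuinely produces a finitary matroid on the infinite set $\ZZ^n$, since the parallel classes are themselves infinite $L$-cosets. I expect this to reduce mechanically to the matroid axioms for $M'$ by combining Proposition \ref{prop:pavingMinors} with the observation that any finite restriction $M|_{E'}$ is a parallel extension of a finite restriction of $M'$ by finitely many cosets, each represented by a finite multiset — so circuit elimination and finite rank transfer directly from $M'$.
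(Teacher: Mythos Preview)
Your proposal is correct and follows essentially the same approach as the paper, pivoting on the fact that $\si(\uMat(I))$ is a rank-$3$ paving matroid on $\ZZ^n/L_I$ and invoking Proposition~\ref{prop:matroidCorrespondence} and Theorem~\ref{thm:correspondenceThm} in both directions. You are in fact more explicit than the paper in justifying several steps (absence of loops, why the simplification is paving, why the maps are mutually inverse); the only minor quibble is that Proposition~\ref{prop:pavingMinors} is not really what you need in the final paragraph, since the parallel extension is a finitary matroid by a direct check of the circuit axioms on finite subsets.
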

\begin{proof}
Suppose $I$ is a zero-dimensional degree-$3$ tropical ideal in $\BB[x_1^{\pm 1},\dots,x_n^{\pm 1}]$. The underlying matroid $\uMat(I)$ has no loops, and its parallelism classes are the cosets of the binomial lattice $L_I$ in $\ZZ^n$. 
The simplification $\si(\uMat(I))$ is then a rank-$3$ paving matroid on the quotient subgroup $\ZZ^n/L_I$. As $\uMat(I)$ is invariant under the action of $\ZZ^n$, so is $\si(\uMat(I))$.
By Proposition \ref{prop:matroidCorrespondence}, the collection of hyperplanes $\cH(\si(\uMat(I)))$ is a $2$-partition of $\ZZ^n/L_I$ that is invariant under the action of $\ZZ^n$. Note that the matroid $\uMat(I)$, and thus the tropical ideal $I$, is determined by the pair $(L_I, \cH(\si(\uMat(I))))$.

Conversely, suppose $L$ is a sublattice of $\ZZ^n$ and $\cP$ is a $\ZZ^n$-invariant $2$-partition of $\ZZ^n/L$. By Proposition \ref{prop:matroidCorrespondence}, $\cP$ is the collection of hyperplanes of a $\ZZ^n$-invariant rank-$3$ paving matroid $M$ on $\ZZ^n/L$.
This matroid is the simplification of a $\ZZ^n$-invariant rank-$3$ matroid on $\ZZ^n$, and thus the pair $(L,\cP)$ corresponds to a zero-dimensional degree-$3$ tropical ideal in $\BB[x_1^{\pm 1},\dots,x_n^{\pm 1}]$. 
\end{proof}

\begin{remark}
The proof of Theorem \ref{thm:simplification} can be directly generalized to show that there is a one-to-one correspondence between zero-dimensional degree-$(d+1)$ tropical ideals in $\BB[x_1^{\pm 1},\dots,x_n^{\pm 1}]$ whose polynomials of minimal support have supports of size $2$, $d+1$, or $d+2$, and pairs $(L, \cP)$, where $L$ is a sublattice of $\ZZ^n$ and $\cP$ is a $d$-partition of the quotient group $\ZZ^n/L$ which is invariant under the action of $\ZZ^n$. As in the $d=2$ case, this correspondence sends such a zero-dimensional tropical ideal $I$ of degree $d+1$ to the pair $(L_I, \cH(\si(\uMat(I))))$.
\end{remark}

We conclude this section with a remark about the structure of blocks in a $\ZZ^n$-invariant $d$-partition of a quotient group $\ZZ^n/L$.

\begin{remark}
When $d \leq 2$, the proof of Proposition \ref{prop:latticeBlocks} directly generalizes to invariant $d$-partitions of a quotient group $\Z^n/L$: 

$\bullet$ If $d \leq 2$, $L \subset \ZZ^n$ is a sublattice and $\cP$ is a $\ZZ^n$-invariant $d$-partition of $\ZZ^n/L$, then any block $S \in \cP$ either has the form $S = [\bfv] + K$ with $[\bfv] \in \ZZ^n/L$ and $K$ a nontrivial proper subgroup of $\ZZ^n/L$, or satisfies $|S\cap ([\bfu] + S)| < d$ for all $[\bfu] \in \ZZ^n/L$.

When $d \geq 3$, this dichotomy no longer holds. As an example, suppose $L \subset \ZZ^2$ is the $1$-dimensional sublattice generated by the vector $(2d-2,0)$. 
Consider the subset 
$$S := \{ [(x,y)] \in \ZZ^2/L : x \in \{0,2,\dots,2d-4\}, y \in \{0, 1\}\}.$$
The collection consisting of all subsets of the form $S + [(i,j)]$ with $i \in \{0,1\}$ and $j\in \ZZ$, together with all $d$-subsets not contained in any such  $S + [(i,j)]$, is a $\ZZ^2$-invariant $d$-partition of $\ZZ^2/L$. However, the block $S$ of this $d$-partition is not of the form $S = [\bfv] + K$ with $[\bfv] \in \ZZ^2/L$ and $K$ a nontrivial proper subgroup of $\ZZ^2/L$, but satisfies $[(2,0)] + S = S$.
\end{remark}

\section{Applications}\label{sec:applications}
As discussed in Section \ref{sec:paving}, paving tropical ideals have a relatively 
simple structure encoded by $\ZZ^n$-invariant $d$-partitions of $\ZZ^n$.
The fact that every ``partial'' $\ZZ^n$-invariant $d$-partition can be canonically
completed to a $\ZZ^n$-invariant $d$-partition allows us to develop a notion of 
``generation'' for paving tropical ideals.

\begin{definition}\label{def:generatedPartition}
Suppose $\cA$ is a collection of subsets of $\ZZ^n$ satisfying the following properties:
\begin{enumerate}
    \item[(A1)] $\ZZ^n \notin \cA$.
    \item[(A2)] $|A| \geq d$ for all $A \in \cA$.
    \item[(A3)] If $A_1, A_2\in\cA$ and $\bfu \in \ZZ^n$ satisfy $|A_1 \cap (\bfu+A_2)|\geq d$ then $A_1 = \bfu + A_2$.
\end{enumerate}
In this case, it is not hard to check that $\cA$ can be extended to a $d$-partition of $\ZZ^n$
that is invariant under the action of $\ZZ^n$, defined as
$$\cP_d(\cA):=(\Z^n+\cA) \cup \cD,$$
where
$$\ZZ^n + \cA := \set{\bfu + A : \bfu\in \Z^n \text{ and } A\in \cA}$$
and
$$\cD := \set{S\subset \Z^n : |S|=d \text{ and } S \not\subset X \text{ for all } X \in \Z^n+\cA}.$$
We call $\cP_d(\cA)$ the \textbf{$\ZZ^n$-invariant $d$-partition of $\ZZ^n$ generated by $\cA$}.
Note that Proposition \ref{prop:latticeBlocks} implies that every subset in $\cA$ is either
$d$-sparse or a non-trivial affine sublattice.
\end{definition}

In this section we use this construction to provide several examples and applications
of paving tropical ideals.

\subsection{Uncountably many tropical ideals with Boolean coefficients}
Our first result shows that there are uncountably many paving tropical ideals of degree $3$ with coefficients in $\BB$. 
By contrast, there are only countably many zero-dimensional tropical ideals of degree $2$ with Boolean coefficients, 
as they are in bijection with sublattices of $\Z^n$; see Proposition \ref{prop:degree2}.

\begin{proposition}\label{prop:manyPavingIdeals}
There are uncountably many degree-$3$ paving tropical ideals in $\B[x^{\pm1}]$.
\end{proposition}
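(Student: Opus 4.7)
The plan is to invoke Theorem \ref{thm:correspondenceThm}, which reduces the problem to producing uncountably many $\ZZ$-invariant $2$-partitions of $\ZZ$, and then to construct such partitions by feeding infinite $2$-sparse subsets of $\ZZ$ into the generation procedure of Definition \ref{def:generatedPartition}. Note that by Definition \ref{def:sparseHyperplane}, a subset $S \subseteq \ZZ$ is $2$-sparse precisely when all pairwise differences of distinct elements are distinct, i.e., $S$ is a Sidon set, and that any subset of a Sidon set is again Sidon.

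I would begin by fixing a concrete infinite Sidon set, for instance $B = \{2^n : n \in \NN\}$ (the Sidon property here is immediate from unique $2$-adic valuation). For each infinite subset $A \subseteq B$, the singleton collection $\cA := \{A\}$ satisfies axioms (A1)--(A3) of Definition \ref{def:generatedPartition}: (A1) and (A2) are clear since $A \subsetneq \ZZ$ is infinite, and (A3) reduces to the Sidon condition $|A \cap (\bfu + A)| \leq 1$ for all $\bfu \neq \bfzero$. Hence $\cP_2(\{A\})$ is a $\ZZ$-invariant $2$-partition of $\ZZ$, corresponding via Theorem \ref{thm:correspondenceThm} to a degree-$3$ paving tropical ideal $I_A \subset \BB[x^{\pm 1}]$.

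The remaining task, which I expect to be the main obstacle, is to show that sufficiently many of the ideals $I_A$ are distinct. The key observation is that the extra blocks $\cD$ introduced by the generation procedure all have size exactly $2$, so every block of $\cP_2(\{A\})$ of cardinality at least $3$ must be a $\ZZ$-translate of $A$. Consequently $I_A = I_{A'}$ forces $A' = \bfu + A$ for some $\bfu \in \ZZ$, i.e.\ $A$ and $A'$ lie in a common $\ZZ$-orbit. Since the Sidon property makes this $\ZZ$-action free on infinite Sidon sets (any nonzero $\bfu$ with $\bfu + A = A$ would give $|A \cap (\bfu + A)| = \infty$, contradicting $2$-sparsity), each orbit is countable. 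As the collection of infinite subsets of $B$ has cardinality $2^{\aleph_0}$, it decomposes into $2^{\aleph_0}$ orbits, yielding uncountably many distinct tropical ideals $I_A$.
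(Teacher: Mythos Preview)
Your proposal is correct and follows essentially the same approach as the paper: both feed subsets of the Sidon set $\{2^n : n \in \NN\}$ (the paper allows any base $m\geq 2$) into Definition~\ref{def:generatedPartition} with $d=2$ and invoke Theorem~\ref{thm:correspondenceThm}. Your orbit-counting argument for distinguishing the resulting ideals is a bit more explicit than the paper's one-line injectivity claim, but the underlying idea is the same.
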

\begin{proof}
Fix an integer $m\geq 2$. For $S\subset \N$ with $|S|\geq 2$, consider the set
$$m^S:=\set{m^s : s\in S} \subset \ZZ.$$
It is easy to check that $\cA_S := \{m^S\}$ satisfies axioms (A1), (A2), and (A3) from Definition \ref{def:generatedPartition} with $d = 2$;
indeed, axiom (A3) follows from the fact that $m^S$ is a $2$-sparse subset: 
the value $m^i-m^j$ is uniquely determined by the pair $(i,j)$ if $i \neq j$.
The collection $\cA_S$ thus generates a $\ZZ$-invariant $2$-partition $\cP_2(\cA_S)$ of $\ZZ$,
which by Theorem \ref{thm:correspondenceThm}, is the set of hyperplanes of a degree-$3$
paving tropical ideal in $\BB[x^{\pm 1}]$.
Since the map $S \mapsto \cP_2(\cA_S)$ is injective, there are at least $2^{|\NN|}$
distinct such tropical ideals. 
\end{proof}

\subsection{Extending matroids to tropical ideals}
We now show that it is often possible to extend a paving matroid on a subset of $\ZZ^n$ to a paving tropical ideal.

\begin{theorem}\label{thm:matroidIdeals}
Suppose $E \subset \ZZ^n$ is a (possibly infinite) $d$-sparse subset, and $M$ is a paving matroid of 
degree $d+1$ on the ground set $E$. Then there exists a degree-$(d+1)$ paving tropical ideal 
$I \subset \BB[x_1^{\pm 1},\dots,x_n^{\pm 1}]$ such that $\uMat(I|_E) = M$.
\end{theorem}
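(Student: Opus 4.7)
The plan is to combine Theorem \ref{thm:correspondenceThm} with the generation mechanism of Definition \ref{def:generatedPartition}. By Proposition \ref{prop:matroidCorrespondence}, the hyperplane set $\cH(M)$ of $M$ is a $d$-partition of $E$. I would take $\cA := \cH(M)$, regarded as a collection of subsets of $\ZZ^n$ via $E \subset \ZZ^n$, and then check that $\cA$ satisfies axioms (A1)--(A3) of Definition \ref{def:generatedPartition}. The resulting $\ZZ^n$-invariant $d$-partition $\cP_d(\cA)$ of $\ZZ^n$ will, by Theorem \ref{thm:correspondenceThm}, correspond to a degree-$(d+1)$ paving tropical ideal $I \subset \BB[x_1^{\pm 1},\dots,x_n^{\pm 1}]$, after which it remains to verify $\uMat(I|_E) = M$.

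Axioms (A1) and (A2) are immediate from the fact that each hyperplane of $M$ is a proper subset of $E$ of rank $d$, hence has size at least $d$ and is distinct from $\ZZ^n$ (note that $E$ itself is proper in $\ZZ^n$ whenever $n \geq 1$, since $\ZZ^n$ is not $d$-sparse). The delicate axiom is (A3): suppose $A_1, A_2 \in \cA$ and $\bfu \in \ZZ^n$ with $|A_1 \cap (\bfu + A_2)| \geq d$. Since $A_1, A_2 \subset E$, we have $A_1 \cap (\bfu + A_2) \subset E \cap (\bfu + E)$, so the $d$-sparsity of $E$ forces $\bfu = \bfzero$. Then $|A_1 \cap A_2| \geq d$, and applying axiom (P3) of the $d$-partition $\cH(M)$ to any $d$-subset of $A_1 \cap A_2$ yields $A_1 = A_2$.

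With $I$ constructed, the remaining task is identifying $\uMat(I|_E)$ with $M$, and since both are paving of rank $d+1$, it suffices to match their circuits of size $d+1$ and $d+2$; the rank of $\uMat(I|_E)$ is exactly $d+1$ because any basis of $M$ fails to lie in any block of $\cP_d(\cA)$, once more by $d$-sparsity. A $(d+1)$-subset $C \subset E$ is a circuit of $\uMat(I|_E)$ precisely when $C$ is contained in a block of $\cP_d(\cA)$, i.e.\ $C \subset \bfu + A$ for some $A \in \cA$ and $\bfu \in \ZZ^n$. If $\bfu \neq \bfzero$, then $C \subset E \cap (\bfu + E)$ contradicts $d$-sparsity since $|C| = d+1 > d$, so $\bfu = \bfzero$ and $C$ lies in a hyperplane of $M$; conversely, every $(d+1)$-circuit of $M$ arises in this way. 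The $(d+2)$-circuit case then follows from the standard paving characterization (a $(d+2)$-subset is a circuit iff it contains no $(d+1)$-circuit). The main obstacle throughout is exactly this last identification: one must prevent translates of hyperplanes of $M$ by nonzero elements of $\ZZ^n$ from sneaking extra circuits back into $E$, and the $d$-sparsity hypothesis on $E$ is tailor-made to rule this out.
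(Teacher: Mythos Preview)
Your proposal is correct and follows essentially the same approach as the paper: take $\cA = \cH(M)$, verify (A1)--(A3) using $d$-sparsity of $E$, and invoke Theorem~\ref{thm:correspondenceThm} on $\cP_d(\cA)$. The only cosmetic difference is in the final identification $\uMat(I|_E) = M$: the paper matches hyperplanes (showing that the maximal sets $H \cap E$ for $H \in \cP_d(\cH(M))$ are exactly $\cH(M)$, since any other block meets $E$ in fewer than $d$ elements), whereas you match circuits directly; both arguments are equivalent and rest on the same $d$-sparsity observation.
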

\begin{proof}
By Proposition \ref{prop:matroidCorrespondence}, the collection $\cH(M)$ of hyperplanes
of $M$ is a $d$-partition of $E$. In particular, $\cH(M)$ satisfies axioms (A1) and (A2)
of Definition \ref{def:generatedPartition}. To show that $\cH(M)$ satisfies (A3),
suppose $|H_1 \cap (\bfu + H_2)| \geq d$ for $H_1, H_2 \in \cH(M)$ and $\bfu \in \ZZ^n$.
We then have $|E \cap (\bfu + E)| \geq d$, and thus, since $E$ is a $d$-sparse subset, 
$\bfu = \bfzero$. We get $|H_1 \cap H_2| \geq d$, which implies that
$H_1 = H_2$ since $\cH(M)$ is a $d$-partition. This shows that $\cH(M)$ also satisfies
axiom (A3).

We can thus extend $\cH(M)$ to a $\ZZ^n$-invariant $d$-partition $\cP_d(\cH(M))$ 
of $\ZZ^n$, as in Definition \ref{def:generatedPartition}.
By Theorem \ref{thm:correspondenceThm}, there is a degree-$(d+1)$ paving tropical ideal 
$I \subset \BB[x_1^{\pm 1},\dots,x_n^{\pm 1}]$ whose underlying matroid
$\uMat(I)$ has hyperplanes $\cP_d(\cH(M))$.
The matroid $\uMat(I|_E)$ is the restriction of $\uMat(I)$ to the subset $E$, and
thus its hyperplanes are the maximal sets of the form $H \cap E$ with $H \in \cP_d(\cH(M))$.
These are exactly the subsets in $\cH(M)$, as any subset in 
$\cP_d(\cH(M))$ but not in $\cH(M)$ intersects $E$ in less than $d$ elements.
It follows that $\uMat(I|_E) = M$, as desired.
\end{proof}

We note that the previous theorem can be used to easily construct examples of non-realizable tropical ideals.

\begin{example}
The non-Pappus matroid is a non-representable paving matroid of rank $3$ on $9$ elements.
Let $M$ be the non-Pappus matroid on the set $E:=\set{1,2,4,\dots,2^8}\subset \Z$.
Note that $E$ is a $2$-sparse subset of $\Z$.
By Theorem \ref{thm:matroidIdeals}, there is a degree-$3$ paving tropical ideal 
$I \subset \B[x_1^{\pm 1}]$ such that $\uMat(I|_E) = M$. 
In particular, this implies that $I$ is a non-realizable tropical ideal.
\end{example}

\subsection{A non-realizable zero-dimensional tropical ideal of degree 2 with Boolean coefficients}

It was shown in \cite{Zaj18} that all zero-dimensional degree-$2$ tropical ideals in $\BB[x^{\pm 1}]$ are 
realizable. Below we show that this is not the case for zero-dimensional degree-$2$ tropical ideals
in more variables.

We first state the following result about the restriction of a paving tropical ideal
to a subsemiring of tropical polynomials in fewer variables.

\begin{proposition}\label{prop:restriction}
Suppose $I$ is a degree-$(d+1)$ paving tropical ideal in the semiring $\B[x_1^{\pm1},\dots,x_n^{\pm1},y_1^{\pm 1},\dots, y_m^{\pm 1}]$, corresponding to the $\ZZ^{n+m}$-invariant $d$-partition $\cP$ of $\ZZ^{n+m}$.
Identify $\ZZ^n$ with the sublattice $\ZZ^n \times \{\bfzero\} \subset \ZZ^{n+m}$ in the natural way. 
If there is no $S\in \cP$ such that $\ZZ^n \subset S$ then $I \cap \B[x_1^{\pm1},\dots,x_n^{\pm1}]$ is a degree-$(d+1)$ paving tropical ideal,
corresponding to the $\ZZ^{n}$-invariant $d$-partition of $\ZZ^n$
$$\cP|_{\ZZ^n} := \{ S \cap \ZZ^n : S \in \cP \text{ and } |S \cap \ZZ^n| \geq d \}.$$
If there is $S\in \cP$ such that $\ZZ^n \subset S$ then $I \cap \B[x_1^{\pm1},\dots,x_n^{\pm1}]$ is the degree-$d$ paving tropical ideal corresponding to the uniform $(d-1)$-partition of $\ZZ^n$ consisting
of all subsets of size $d-1$.
\end{proposition}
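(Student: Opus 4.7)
The plan is to reduce the statement to an identification of the restriction matroid $\uMat(I)|_{\ZZ^n}$. First I would observe that $\uMat(I \cap \BB[x_1^{\pm1},\dots,x_n^{\pm1}]) = \uMat(I)|_{\ZZ^n}$: a subset of $\ZZ^n$ contains the support of a polynomial in $I \cap \BB[x_1^{\pm1},\dots,x_n^{\pm1}]$ iff it contains the support of some polynomial in $I$ entirely supported in $\ZZ^n$, which is the same as containing a circuit of $\uMat(I)$ contained in $\ZZ^n$. By Theorem \ref{thm:correspondenceThm} it then suffices to determine this restriction matroid together with its hyperplanes.

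I would next pin down the rank of $\uMat(I)|_{\ZZ^n}$. Every $d$-subset of $\ZZ^n$ is independent in the paving matroid $\uMat(I)$, so the rank is at least $d$. The rank equals $d+1$ precisely when some $(d+1)$-subset of $\ZZ^n$ is not contained in any block of $\cP$. Using the hyperplane axiom (HF), this in turn happens iff $\ZZ^n$ itself is not contained in any $S \in \cP$: indeed, if $\ZZ^n \not\subset S$ for all $S$, then (HF) produces a finite $X \subset \ZZ^n$ with $X \not\subset S$ for every $S$, forcing $\rank(X) = d+1$, and $X$ then contains an independent $(d+1)$-subset. This dichotomy corresponds exactly to the two cases in the statement.

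In Case 1, the restriction is paving of rank $d+1$ since its circuits are those circuits of $\uMat(I)$ that lie in $\ZZ^n$, hence of size $d+1$ or $d+2$. To identify its hyperplanes, I would use that every $d$-subset $A \subset \ZZ^n$ is contained in a unique block $S_A \in \cP$; then $S_A \cap \ZZ^n$ has size at least $d$, hence rank $d$, and its maximality among rank-$d$ subsets reduces to the uniqueness of $S_A$. This gives $\cH(\uMat(I)|_{\ZZ^n}) = \cP|_{\ZZ^n}$, and $\ZZ^n$-invariance follows from $\bfu + (S \cap \ZZ^n) = (\bfu + S) \cap \ZZ^n$ for $\bfu \in \ZZ^n$. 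In Case 2, $\ZZ^n$ lies in a single block $S \in \cP$, so every subset of $\ZZ^n$ has rank at most $\rank(S) = d$; combined with independence of all $d$-subsets, this forces $\uMat(I)|_{\ZZ^n}$ to be the uniform matroid of rank $d$ on $\ZZ^n$, whose hyperplanes are exactly the $(d-1)$-subsets of $\ZZ^n$. The main technical step is the hyperplane identification in Case 1, where the combined use of axiom (HF) and the uniqueness in a $d$-partition from Proposition \ref{prop:matroidCorrespondence} does most of the work.
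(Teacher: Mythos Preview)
Your proposal is correct and follows essentially the same approach as the paper: identify $\uMat(I')$ with the restriction $\uMat(I)|_{\ZZ^n}$, split into cases according to whether $\ZZ^n$ lies in a block of $\cP$, and in the rank-$(d+1)$ case read off the hyperplanes as the maximal intersections $S\cap\ZZ^n$, which are precisely those with $|S\cap\ZZ^n|\ge d$. Your version is more explicit in a couple of places where the paper is terse---you invoke axiom (HF) to certify that $\ZZ^n$ spans when it is not contained in any block, and you check $\ZZ^n$-invariance of $\cP|_{\ZZ^n}$ directly---but these are elaborations of the same argument rather than a different route.
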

\begin{proof}
Denote $I' = I \cap \B[x_1^{\pm1},\dots,x_n^{\pm1}]$. The underlying matroid $\uMat(I')$ is the
restriction of the paving matroid $\uMat(I)$ to $\ZZ^n$, and so it is a paving matroid.
The collection $\cP$ is the set of hyperplanes of $\uMat(I)$.
If there is a hyperplane $S\in \cP$ such that $\ZZ^n \subset S$ then $\uMat(I')$ is the uniform matroid 
of rank $d$, which corresponds to the uniform $(d-1)$-partition of $\ZZ^n$.
If there is no hyperplane $S\in \cP$ such that $\ZZ^n \subset S$ then $\uMat(I')$ has rank $d+1$, 
and its hyperplanes are the maximal subsets of the form $S \cap \ZZ^n$ with $S \in \cP$.
As $\cP$ is a $d$-partition, these are exactly the subsets of the form $S \cap \ZZ^n$ with $S \in \cP$
and $|S \cap \ZZ^n| \geq d$, as claimed. 
\end{proof}

Recall that zero-dimensional degree-$2$ tropical ideals with Boolean coefficients are in bijection with proper sublattices
of $\ZZ^n$, as in Proposition \ref{prop:degree2}.

\begin{lemma}\label{lem:char2}
The zero-dimensional degree-$2$ tropical ideal $I \subset \BB[x^{\pm 1}]$ corresponding to the sublattice $4\Z \subset \ZZ$ is not realizable over any field of characteristic $2$.
\end{lemma}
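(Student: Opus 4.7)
The plan is to argue by contradiction, leveraging the characteristic-$2$ factorization $a + bx^4 = b(x+\delta)^4$ (where $\delta^4 = a/b$), which forces any degree-$2$ divisor of $a + bx^4$ to be a pure square with only two monomials.

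Suppose for contradiction that $I = \trop(J)$ for some proper ideal $J \subset K[x^{\pm 1}]$ with $\mathrm{char}(K) = 2$. Since $K[x^{\pm 1}]$ is a PID, $J = (f)$ for some Laurent polynomial; after multiplying by a suitable unit $x^m$ I may assume $f \in K[x]$ with $f(0) \neq 0$. By Proposition \ref{prop:degree2} the degree of $I$ is $2$, and for a realizable zero-dimensional ideal this coincides with $\dim_K K[x^{\pm 1}]/J = \dim_K K[x]/(f) = \deg f$, so I write $f = c_0 + c_1 x + c_2 x^2$ with $c_0, c_2 \in K^{\times}$.

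Next I want to extract from $J$ a binomial with support $\{0, 4\}$. By Proposition \ref{prop:degree2}, the polynomial $x^0 \tplus x^4$ belongs to $I$ since $0 - 4 \in 4\ZZ$. Supports of elements of $\trop(J)$ are finite unions of shifted supports of elements of $J$; for $\{0, 4\}$ to arise this way, either it is itself a shifted support, giving some $g = a + bx^4 \in J$ with $a, b \in K^{\times}$, or else it decomposes as a union of strictly smaller shifted supports. The latter would require $J$ to contain a monomial, making $J = K[x^{\pm 1}]$ and contradicting properness. Hence $g = a + bx^4 \in J$ with $a, b \in K^{\times}$. Now I work in $\bar K[x]$: in characteristic $2$,
\[ g = a + bx^4 = b\bigl(x^2 + \gamma\bigr)^2 = b(x + \delta)^4, \]
where $\gamma^2 = a/b$ and $\delta^2 = \gamma$. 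Since $f \mid g$ in $K[x^{\pm 1}]$ and $\deg f = 2$, unique factorization in $\bar K[x]$ forces $f$ to be a scalar multiple of $(x + \delta)^2 = x^2 + \gamma$. Therefore $c_1 = 0$, so $\supp(f) = \{0, 2\}$ is a support in $I$, contradicting Proposition \ref{prop:degree2}, which says the $2$-element supports in $I$ are precisely pairs differing by elements of $4\ZZ$, and $-2 \notin 4\ZZ$.

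The step I expect to require the most care is extracting from $J$ an element with support exactly $\{0, 4\}$: one must unpack how the tropical ideal $\trop(J)$ is generated from the supports of elements of $J$, and rule out the pathological case where $J$ contains monomials. Once this is in hand, the characteristic-$2$ factorization of $a + bx^4$ does all the real work, as the divisibility constraint immediately collapses $f$ to a binomial supported on $\{0,2\}$, which is incompatible with the $4\ZZ$-structure of $I$.
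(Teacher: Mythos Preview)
Your proof is correct and follows the same overall strategy as the paper's: both identify a degree-$2$ generator $f$ of $J$, observe that $J$ contains a binomial $a+bx^4$, and exploit the divisibility $f \mid a+bx^4$ in characteristic $2$ to reach a contradiction. The paper first pins down $\supp(f)=\{0,1,2\}$ (since this is the unique minimal-degree support in $I\cap\BB[x]$), then expands $(x^2+ax+b)(x^2+cx+d)=x^4+e$ over $K$ and compares coefficients to force $a^2=0$; you instead pass to $\bar K$, use the Frobenius factorization $a+bx^4=b(x+\delta)^4$ to conclude $f=c_2(x+\delta)^2$ has $c_1=0$, and then derive the contradiction from $\{0,2\}\notin 4\ZZ$. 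These are two equivalent ways of extracting the same algebraic obstruction; neither is materially more general, though your route makes the role of Frobenius explicit while the paper's stays entirely over $K$.
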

\begin{proof}
Suppose for a contradiction that $I = \trop(J)$ for an ideal $J \subset K[x^{\pm 1}]$ with $K$ a field of characteristic $2$. Let $J' = J \cap K[x]$. Note that $I' := I \cap \BB[x] = \trop(J')$.
Since $K[x]$ is a principal ideal domain and $x^2 \tplus x \tplus 0 \in I'$ is the unique polynomial of minimal degree, 
we must have $J' =\ideal{x^2+a x+b}$ for some nonzero $a,b \in K$. 
The tropical ideal $I'$ also contains the polynomial $x^4\tplus 0$,
so $x^4 + e \in J'$ for some nonzero $e\in K$. This implies that there exist $c,d \in K$ such that
\begin{align*}
    x^4+e &=(x^2+ax+b)(x^2+cx+d)\\
            &=x^4+(a+c) x^3+ (ac+b+d)x^2+(ad+bc)x+bd.
\end{align*}
It follows that $a=c$, $a^2+b+d = 0$, and $ad+ba=0$. The last equality implies $d+b=0$,
and so we get $a^2=0$ from the second equality, a contradiction. 
\end{proof}

\begin{proposition}\label{prop:nonrealizable}
The zero-dimensional degree-$2$ tropical ideal $I \subset \BB[x_1^{\pm 1}, x_2^{\pm 1}, x_3^{\pm 1}]$ 
corresponding to the sublattice $L \subset \ZZ^3$ generated by $(4,0,0), (0,2,0), (0,0,2)$ is not realizable.
\end{proposition}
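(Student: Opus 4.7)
The plan is to derive a contradiction from the assumption that $I = \trop(J)$ for some ideal $J$ in a Laurent polynomial ring over a field $K$. The argument has two parts: first I would use Lemma \ref{lem:char2} to rule out $\mathrm{char}(K) = 2$, and then I would use a multiplicative-structure argument on the quotient algebra to rule out all other characteristics.

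For the characteristic-$2$ step, I would restrict to the $x_1$-axis. Since $L \cap (\ZZ \times \{\bfzero\}) = 4\ZZ \times \{\bfzero\}$, Proposition \ref{prop:restriction} identifies $I \cap \BB[x_1^{\pm 1}]$ with the degree-$2$ paving tropical ideal associated to the sublattice $4\ZZ \subset \ZZ$. If $I$ is realized by $J$, then this restriction is realized by $J \cap K[x_1^{\pm 1}]$, and Lemma \ref{lem:char2} forces $\mathrm{char}(K) \neq 2$.

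For the remaining characteristics, I would work in the $K$-algebra $R := K[\bfx^{\pm 1}]/J$, which is $2$-dimensional over $K$ since $\deg(I) = 2$. Consider the group homomorphism
\[
\psi \colon \ZZ^3 \longrightarrow R^*/K^*, \qquad \bfu \longmapsto [\bfx^\bfu \bmod J].
\]
Using Proposition \ref{prop:degree2}, I would verify that $\ker(\psi) = L$: indeed, $\bfu \in L$ if and only if $\{\bfu, \bfzero\}$ is a circuit of $\uMat(I)$, if and only if $J$ contains a binomial $\bfx^\bfu + \lambda$ with $\lambda \in K^*$, if and only if $\bfx^\bfu \in K^*$ modulo $J$. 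This yields an injection $\ZZ^3/L \cong \ZZ/4 \times \ZZ/2 \times \ZZ/2 \hookrightarrow R^*/K^*$.

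To conclude, I would classify $R$ as one of $K \times K$, a quadratic field extension $K(\sqrt{b})$, or the dual numbers $K[\epsilon]/(\epsilon^2)$ (the only $2$-dimensional commutative $K$-algebras), and show that in each case the $2$-torsion subgroup of $R^*/K^*$ is cyclic of order at most $2$. Concretely, $R^*/K^* \cong K^*$ in the first case (with $2$-torsion $\{\pm 1\}$); the computation $(x + y\sqrt{b})^2 \in K$ forces $x = 0$ or $y = 0$ in the field case; and $R^*/K^* \cong (K, +)$ is $2$-torsion free in odd characteristic in the dual numbers case. Since the $2$-torsion subgroup of $\ZZ/4 \times \ZZ/2 \times \ZZ/2$ is $(\ZZ/2)^3$ of order $8$, no such embedding can exist. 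The main obstacle is carefully verifying that $\ker(\psi) = L$ by tracing between the tropical description of binomials in $I$ and the algebraic condition $\bfx^\bfu \in K^* \bmod J$, and justifying $\dim_K R = \deg(I) = 2$ for realizable zero-dimensional tropical ideals.
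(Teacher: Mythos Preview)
Your proposal is correct. The first half (restricting to $\BB[x_1^{\pm 1}]$ and invoking Lemma~\ref{lem:char2} to exclude characteristic $2$) is exactly what the paper does. The second half is genuinely different: the paper restricts again, this time to $\BB[x_2^{\pm 1},x_3^{\pm 1}]$, obtains the degree-$2$ tropical ideal associated to the sublattice $2\ZZ \times 2\ZZ \subset \ZZ^2$, and then cites \cite{Zaj18}*{Proposition 5.2.8} for the fact that this ideal is not realizable in characteristic $\neq 2$. You instead give a direct, self-contained argument: the embedding $\ZZ^3/L \hookrightarrow R^*/K^*$ together with the classification of $2$-dimensional commutative $K$-algebras forces the $2$-torsion of $R^*/K^*$ to be cyclic of order at most $2$ when $\mathrm{char}(K)\neq 2$, which cannot accommodate $(\ZZ/2)^3$.

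What each approach buys: the paper's proof is shorter on the page because it outsources the characteristic $\neq 2$ obstruction to an external reference, and it stays entirely within the ``restrict and compare'' paradigm of Proposition~\ref{prop:restriction}. Your argument is more self-contained and more conceptual: it identifies the actual algebraic obstruction (the $2$-torsion of $R^*/K^*$ is too small), and the method visibly generalizes to other lattices $L$ with large $2$-torsion in $\ZZ^n/L$. The two points you flag as needing care---that $\dim_K R = \deg(I)$ and that $\ker\psi = L$---are both routine: the first is the standard fact that tropicalization preserves the Hilbert function, and the second follows exactly as you outline, since $\{\bfu,\bfzero\}$ is a circuit of $\uMat(I)$ precisely when $J$ contains a binomial $\bfx^\bfu - \lambda$ with $\lambda \in K^*$.
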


\begin{proof}
Suppose $I=\trop(J)$ for some ideal $J \subset  K[x_1^{\pm 1}, x_2^{\pm 1}, x_3^{\pm 1}]$ with coefficients
in a field $K$. 
We then have $$I^{\{1\}}:=I\cap \B[x_1^{\pm 1}]= \trop(J\cap K[x_1^{\pm 1}])$$
and  
$$I^{\set{2,3}}:=I\cap \BB[x_2^{\pm 1},x_3^{\pm 1}]=\trop(J\cap K[x_2^{\pm 1},x_3^{\pm 1}]).$$
The partition of $\ZZ^3$ corresponding to $I$ is the collection of all translates of $L$.
By Proposition \ref{prop:restriction}, the ideal $I^{\{1\}} \subset \BB[x_1^{\pm 1}]$ is the degree-$2$ paving tropical ideal
corresponding to the partition of $\ZZ$ consisting of all the translates of $L \cap \ZZ = 4\ZZ$, 
so it is the zero-dimensional degree-$2$ tropical ideal corresponding to the sublattice 
$4\ZZ \subset \ZZ$. It follows by Lemma \ref{lem:char2} that $K$ is a field of characteristic not equal to $2$.
In a similar way, $I^{\set{2,3}}$ is the zero-dimensional degree-$2$ tropical ideal corresponding to the
sublattice of $\ZZ^2$ generated by $(2,0)$ and $(0,2)$. By \cite{Zaj18}*{Proposition 5.2.8}, this
tropical ideal is not realizable over a field of characteristic other than $2$, a contradiction.
\end{proof}

It would be interesting to know whether all zero-dimensional degree-$2$ tropical ideals in $\BB[x_1^{\pm 1}, x_2^{\pm 1}]$ are realizable over some field.

\bibliographystyle {alpha}
\bibliography {main}

\end{document}